\numberwithin{equation}{section}
\theoremstyle{plain}
\newtheorem{thm}{Theorem}[section]
\newtheorem{prop}[thm]{Proposition}
\newtheorem{lem}[thm]{Lemma}
\newtheorem{defi}[thm]{Definition}
\theoremstyle{definition}
\theoremstyle{remark}
\newtheorem{rem}[thm]{Remark}
\newcommand{\ess}{\text{\rm{ess}}\sup}
\newcommand{\R}{\mathbb{R}}
\newcommand\N{{\mathbb N}}
\newcommand\Z{{\mathbb Z}}
\newcommand\pref[1]{(\ref{#1})}
\let \eps\varepsilon
\DeclareMathOperator{\argmin}{argmin}
\DeclareMathOperator{\spt}{spt}
\DeclareMathOperator{\diam}{diam}
\def\<#1,#2>{\left<#1,#2\right>}
\let\bar\overline
\newcommand\tI{\widetilde {I}}
\newcommand\tx{\widetilde {x}}
\newcommand\ty{\widetilde {y}}
\def\esup{\mathop\mathrm{ess\,sup\,}}
\newcommand{\Prob}{\mathcal P}
\newcommand{\densgamma}{\frac{ \text{d} \gamma }{\text{d} \mu \otimes \nu}}
\title{Entropic approximation of $\infty$-optimal transport problems }
\author{Camilla Brizzi \thanks{Dipartimento di Matematica e Informatica, Universit\`a di Firenze, Viale Morgagni 67/a, 50134 Firenze, Italy, \texttt{camilla.brizzi@unifi.it}}
\and
Guillaume Carlier\thanks{CEREMADE, UMR CNRS 7534, Universit\'e Paris 
Dauphine, PSL, Pl. de Lattre de Tassigny, 75775 Paris Cedex 16, France and INRIA-Paris, MOKAPLAN,
\texttt{carlier@ceremade.dauphine.fr}}
\and
Luigi De Pascale \thanks{Dipartimento di Matematica e Informatica, Universit\`a di Firenze, Viale Morgagni 67/a, 50134 Firenze, Italy,
\texttt{luigi.depacale@unifi.it}}
}
\begin{document}

\maketitle
\begin{abstract}
We propose an entropic approximation approach for optimal transportation problems with a supremal cost. 
We establish $\Gamma$-convergence for suitably chosen parameters for the entropic penalization and that this procedure selects  $\infty$-cyclically monotone plans at the limit. We also present some numerical illustrations performed with Sinkhorn's algorithm. 
\end{abstract}

\noindent\textbf{Keywords:}  $\infty$-optimal transport, $\infty$-cyclical monotonicity, entropic approximation.

\medskip

\noindent\textbf{MS Classification:} 49Q22, 65K10.

\section{Introduction}\label{sec-intro}

The usual Monge-Kantorovich optimal transport problem consists, given a transportation cost and distribution of sources and targets, in finding a transport plan making the average transport cost minimal. It has attracted a considerable amount of attention in the last three decades, as can be seen from the textbooks of Villani \cite{Villani1}, \cite{Villani2} and Santambrogio \cite{Santambrogio}.  In optimal transport probems with a supremal cost (also called $L^\infty$ optimal transport), one rather looks for transport plans which minimize the essential supremum of the cost. Whereas the usual Monge-Kantorovich problem is linear programming, $L^\infty$ optimal transport leads to non-convex optimization (eventhough the supremal cost has convex sublevel sets), which to  a large extent, explains why there are less theoretical results and numerical methods (with the notable exception of the recent combinatorial approach  of Bansil and Kitagawa \cite{BK}) to address them. As in the Calculus of Variations with a supremal functional, $L^\infty$ optimal transport may admit many minimizers and selecting special ones which satisfy tractable optimality conditions is an important issue, which was studied first by Champion, De Pascale and Juutinen in \cite{CDPJ}. In contrast with the classical Monge-Kantorovich problem, where restrictions of optimal plans remain optimal between their marginals, this might be false for $L^\infty$ optimal transport. This is why the authors of \cite{CDPJ} have introduced the notion of restrictable optimal and shown that such restrictable solutions are characterized by a remarkable property of $\infty$-cyclical monotonicity of their support. This was the starting point for the existence of optimal maps for $L^\infty$ optimal transport under various conditions on the cost and the marginals, see \cite{CDPJ}, \cite{Jyl}, \cite{BDPK}.

\smallskip

Among numerical methods for optimal transport (see Cuturi and Peyr\'e \cite{CP19}, Benamou \cite{Benamou}, M\'erigot and Thibert \cite{QMBT}),  the entropic penalization approach and the Sinkhorn algorithm have gained a lot of popularity since Cuturi's paper \cite{Cuturi13}.  Entropic optimal transport, which has connections with large deviations and the so-called Schr\"odinger bridge problem, see L\'eonard \cite{Leonard12} has also stimulated an intensive stream of recent theoretical research, see the lecture notes of Nutz \cite{Nutz21notes} and the references therein. A recent breakthrough in the field is  the work of Bernton, Ghosal and Nutz \cite{BGN} where a large deviations principle, related to cyclical monotonicity is established for entropic optimal plans.

\smallskip

The goal of the present paper is to investigate, theoretically and numerically, whether the entropic approximation strategy can be used  for  $L^\infty$ optimal transport as well. We will in particular see how the results of \cite{BGN} can be used to show that this approximation selects at the limit the distinguished restrictable $\infty$-cyclically monotone minimizers introduced in \cite{CDPJ}.

\smallskip

The article is organized as follows. The setting is introduced in Section \ref{sec-prelim}. Section \ref{sec-gamma} is devoted to $\Gamma$-convergence towards the supremal cost functional. In Section \ref{sec-icm}, we study how the entropic penalization selects $\infty$-cyclically monotone plans in the limit. In Section \ref{sec-speed}, we give some quantitative convergence estimates and a large deviations upper bound in the spirit of \cite{BGN}.  Finally, we present  some numerical illustrations in Section \ref{sec-num}. 

\section{Assumptions and notations}\label{sec-prelim}

In the sequel, we will  always assume  that the transportation cost is continuous and nonnegative, $c\in C(\R^d\times\R^d,\R_+)$, and  that the fixed marginals of the problem, $\mu,\nu$ are  two Borel probability measures on $\R^d$, $\mu,\nu\in\Prob(\R^d)$, with compact support. Let $\Pi(\mu, \nu)$ 
be the set of transport plans between $\mu$ and $\nu$ i.e. the set of  Borel probability measures on $\R^d\times \R^d$ having $\mu$ and $\nu$  as marginals. More precisely, a Borel probability measure $\gamma$ on $\R^d \times \R^d$ belongs to $\Pi(\mu, \nu)$ when
\[\gamma(A\times \R^d)=\mu(A) \mbox{ and } \gamma(\R^d\times A)=\nu(A),\]
for every Borel subset $A$ of $\R^d$. Note that every $\gamma$
in $\Pi(\mu, \nu)$ has its support in $\spt(\mu)\times \spt(\nu)$ and that $c$ is uniformly continuous on $\spt(\mu)\times \spt(\nu)$. We are interested in the following $\infty$-optimal transport problem (see
\cite{CDPJ}):
\begin{equation}\label{mainpbm}\tag{$\infty$-OT}
\inf_{\gamma\in \Pi(\mu, \nu)} \gamma-\esup  c=\Vert c \Vert_{L^{\infty}(\gamma)}.
\end{equation}
In contrast with classical optimal transport where one minimizes an integral cost,
\begin{equation}\label{mongekant}\tag{OT}
\inf_{\gamma\in \Pi(\mu, \nu)} \int_{\R^d\times \R^d}c(x,y)d\gamma,
\end{equation} 
\eqref{mainpbm} is a non-convex and presumably  harder problem. \\
Due to the success of entropic approximation of optimal transport with regularization parameter $\eps>0$
\begin{equation}\label{EOT}\tag{$\eps$-EOT}
	\inf_{\gamma\in \Pi(\mu, \nu)}\int_{\R^d\times \R^d}c(x,y)d\gamma + \eps H(\gamma|\mu\otimes\nu), 
\end{equation}
recalled in the introduction, 
it seems natural to introduce, for $\eps>0$ and exponent $p\geq 1$ the following functional $J_{p,\eps}:\Prob(\R^d\times\R^d)\to\R\cup\{+\infty\}$ 
\[J_{p,\eps}(\gamma):=\begin{cases}
\left(\int_{\R^d\times \R^d} c(x,y)^pd\gamma(x,y)+\eps H(\gamma|\mu\otimes\nu) \right)^{\frac{1}{p}} \quad &\text{if} \ \gamma\in\Pi(\mu,\nu),\\
+\infty \quad &\text{otherwise,}
\end{cases}\]
where $H$ stands for relative entropy:
\[H(\gamma|\mu\otimes\nu)=\begin{cases} \int_{\R^d\times \R^d} \log\Big( \frac{ \text{d} \gamma }{\text{d} \mu \otimes \nu}\Big) \mbox{d} \gamma  & \mbox{ if $\gamma \ll \mu\otimes \nu$}, \\ + \infty \quad &  \mbox{ otherwise.} \end{cases}\]
Note that due to strict convexity of the entropy, for every $\eps>0$ and $p\geq 1$, $J_{p, \eps}$ admits a unique minimizer. We now denote by $J_{\infty}:\Prob(\R^d\times\R^d)\to\R\cup\{+\infty\}$, the supremal functional
\begin{equation*}
J_\infty(\gamma):=\begin{cases}
\gamma-\esup  c   \quad &\text{if} \ \gamma\in\Pi(\mu,\nu),\\
+\infty \quad &\text{otherwise}.
\end{cases}
\end{equation*}
Finally, let us set
\[J_{p}:=J_{p,1}.\]
Since $H(\gamma\vert \mu \otimes \nu)\ge 0$ with an equality exactly when $\gamma=\mu \otimes \nu$, $J_{p, \eps}(\gamma) \ge \Vert c\Vert_{L^p(\gamma)}$ but also $\Vert c\Vert_{L^p(\gamma)} \le J_{\infty}(\gamma)$. So, roughly speaking  both approximations play in opposite directions: adding the entropic term is an approximation \emph{from above} but approximating $\Vert c\Vert_{L^{\infty}(\gamma)}$ by $\Vert c\Vert_{L^p(\gamma)}$ is an approximation \emph{from below}. \\We also observe that letting $p\to \infty$ and $\eps\to 0$ is not enough to ensure that minimizers of $J_{p,\eps}$ converge to a minimizer of $J_{\infty}$ (i.e. a solution of \ref{mainpbm}). Indeed, if $\Vert c\Vert_{\infty} \leq \frac{1}{2}$ and $\eps=\frac{1}{p}$ the minimizer $\gamma_p$ of $J_{p, \frac{1}{p}}$ satisfies
\[H(\gamma_p \vert \mu\otimes \nu) \le p 2^{-p}\]
hence $\gamma_p$ converges (actually strongly by Pinsker's inequality, see e.g. Lemma 2.5 in \cite{Tsy}) to $\mu\otimes \nu$ which in general is not a minimizer of $J_{\infty}$. On the one hand, this suggests that $\Gamma$-convergence of the regularizations above to $J_{\infty}$ require conditions relating $\eps$ to $p$. On the other hand, in the previous example, we see that the range of $c^p$ compared to the size of the entropic penalization $\eps$ is crucial. But the solutions of the $\infty$-optimal transport problem are invariant when one replaces $c$ by an increasing function of $c$, in particular one can replace $c$ by $c+2$ (say) so that $c^p$ will typically dominate the entropic term and one can expect $\Gamma$-convergence as $p\to \infty$ for a fixed (or even large) value of $\eps$ (see the next section for more details).

\section{$\Gamma$-convergence}\label{sec-gamma}

Our first result concerns the $\Gamma$-convergence of $J_{p, \eps}$ to $J_\infty$:

\begin{thm}\label{th:gammaconvergence}
Under the general assumptions of Section \ref{sec-prelim} we have:

\begin{enumerate}

\item $J_{p, \eps_p}$ $\Gamma$-converges (for the  weak star topology of  $\Prob(\spt(\mu)\times \spt(\nu)$) to $J_{\infty}$ as $p\to \infty$ provided $\eps_p^{\frac{1}{p}} \to 0$ as $p\to \infty$,

\item if, in addition, $c\ge 1+\lambda$ with $\lambda \geq 0$, then $J_{p, \eps_p}$  $\Gamma$-converges to $J_{\infty}$ as $p\to \infty$ provided 
\begin{equation}\label{condepsp}
\lim_{p \to \infty} \frac{1}{p} \log\Big(1+\eps_p \frac{\log(p)}{(1+\lambda)^p}\Big)=0.
\end{equation}
In particular, in this case, $J_{p,1}$ and $J_{p,p}$ $\Gamma$-converge to $J_{\infty}$ as $p\to \infty$. 
%:=J_{p,1}$ $\Gamma$-converges to $J_{\infty}$ as $p\to \infty$. 
\end{enumerate}
\end{thm}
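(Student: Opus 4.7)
The plan is to prove the two defining $\Gamma$-convergence inequalities (liminf and limsup), with the same liminf argument serving both parts and the limsup handled in parallel via a common block-approximation construction.

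\textbf{Liminf inequality.} Let $\gamma_p \weakto \gamma$. We may assume $\gamma_p \in \Pi(\mu,\nu)$ eventually (otherwise $\liminf = +\infty$), so $\gamma \in \Pi(\mu,\nu)$ by weak closedness of $\Pi(\mu,\nu)$. Dropping the nonnegative entropy term gives $J_{p,\eps_p}(\gamma_p) \ge \|c\|_{L^p(\gamma_p)}$. For any fixed $q \ge 1$ and all $p\ge q$, Jensen's inequality applied to the probability measure $\gamma_p$ yields $\|c\|_{L^p(\gamma_p)} \ge \|c\|_{L^q(\gamma_p)}$. Since $c^q$ is bounded and continuous on the compact set $\spt(\mu)\times \spt(\nu)$, weak-$*$ convergence forces $\|c\|_{L^q(\gamma_p)} \to \|c\|_{L^q(\gamma)}$, hence $\liminf_p J_{p,\eps_p}(\gamma_p) \ge \|c\|_{L^q(\gamma)}$. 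Letting $q \to \infty$ recovers $J_{\infty}(\gamma)$.

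\textbf{Limsup inequality: common construction.} Given $\gamma \in \Pi(\mu,\nu)$ with $M := J_\infty(\gamma)$, the naive choice $\gamma_p = \gamma$ fails because $\gamma$ may not have finite entropy relative to $\mu\otimes\nu$. I would use a block approximation: partition a compact cube containing $\spt(\mu)\cup\spt(\nu)$ into finitely many cubes $\{Q_i\}$ of diameter at most $\delta$ and set
\begin{equation*}
\gamma_\delta \eqdef \sum_{\mu(Q_i)\nu(Q_j)>0} \frac{\gamma(Q_i\times Q_j)}{\mu(Q_i)\nu(Q_j)}\,(\mu\res Q_i)\otimes(\nu\res Q_j).
\end{equation*}
This plan lies in $\Pi(\mu,\nu)$, converges weakly to $\gamma$ as $\delta \to 0$, has density bounded by $\min_{i,j}\{1/\mu(Q_i), 1/\nu(Q_j)\}$ with respect to $\mu\otimes\nu$ and hence finite entropy $H_\delta \eqdef H(\gamma_\delta|\mu\otimes\nu)$. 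Uniform continuity of $c$ on $\spt(\mu)\times \spt(\nu)$ gives $\|c\|_{L^\infty(\gamma_\delta)} \le M + \omega(2\delta) =: M_\delta$, where $\omega$ is a modulus of continuity of $c$. Combining yields the key estimate
\begin{equation*}
J_{p,\eps_p}(\gamma_\delta)^p \le M_\delta^p + \eps_p H_\delta.
\end{equation*}

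\textbf{Finishing the limsup.} For part (1), the subadditivity of $x\mapsto x^{1/p}$ gives $J_{p,\eps_p}(\gamma_\delta) \le M_\delta + (\eps_p H_\delta)^{1/p}$; for every fixed $\delta$ the hypothesis $\eps_p^{1/p}\to 0$ (together with $H_\delta$ constant in $p$) forces $(\eps_p H_\delta)^{1/p} \to 0$, so $\limsup_p J_{p,\eps_p}(\gamma_\delta) \le M_\delta$. A diagonal extraction in $\delta \to 0$ then produces the recovery sequence. For part (2), I would instead rewrite the estimate multiplicatively using the lower bound $c \ge 1+\lambda$:
\begin{equation*}
J_{p,\eps_p}(\gamma_\delta) \le M_\delta\,\Bigl(1 + \frac{\eps_p H_\delta}{(1+\lambda)^p}\Bigr)^{1/p}.
\end{equation*}
Selecting $\delta_p \to 0$ slowly enough to guarantee $H_{\delta_p} \le \log p$ (possible because each $H_\delta$ is finite) and passing to the limit, the bound becomes $M_{\delta_p}\,(1 + \eps_p\log(p)/(1+\lambda)^p)^{1/p}$, whose $1/p$-th power tends to $1$ exactly by assumption \eqref{condepsp}, while $M_{\delta_p} \to M$. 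The two corollaries for $\eps_p \in \{1, p\}$ reduce to elementary checks that \eqref{condepsp} holds, i.e.\ that $\log(p)/p$ and $\log(p\log p)/p$ vanish.

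\textbf{Expected obstacle.} The delicate point is the diagonal calibration of $\delta_p$ in part (2): it must go to $0$ slowly enough that $H_{\delta_p}$ grows no faster than $\log p$ (recall $H_\delta \to +\infty$ as $\delta\to 0$ in general), yet fast enough that both $\gamma_{\delta_p}\weakto \gamma$ and $M_{\delta_p}\to M$. The multiplicative estimate enabled by the bound $c\ge 1+\lambda$ is precisely what allows the much weaker condition \eqref{condepsp}---which even accommodates exponentially large $\eps_p$---to replace $\eps_p^{1/p}\to 0$ from part (1).
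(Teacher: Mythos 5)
Your proof is correct and follows essentially the same route as the paper: the liminf via dropping the entropy and using monotonicity of $L^q$ norms, and the limsup via the block approximation, with the additive bound for part (1) and the multiplicative bound exploiting $c\ge 1+\lambda$ for part (2). The only cosmetic difference is that the paper fixes the scale $\delta=1/p$ and uses the explicit entropy estimate $H(\gamma^\delta|\mu\otimes\nu)\le d\log(L/\delta)$, whereas you calibrate $\delta_p$ by a diagonal argument using only finiteness of $H_\delta$; both are valid.
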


\begin{proof}
1. Let $\gamma_p\in \Pi(\mu, \nu)$ converge weakly star to $\gamma$. By nonnegativity of $H(\gamma_p \vert \mu \otimes\nu)$, we have
\[\liminf_p J_{p, \eps_p} (\gamma_p)\ge \liminf_p \Vert c\Vert_{L^p(\gamma_p)}.\]
Hence, for fixed $q$, since $\Vert c\Vert_{L^p(\gamma_p)} \ge \Vert c\Vert_{L^q(\gamma_p)}$ for $p\ge q$, we have 
\[\liminf_p J_{p, \eps_p} (\gamma_p)\ge \liminf_p \Vert c\Vert_{L^q(\gamma_p)} =\Vert c\Vert_{L^q(\gamma)}\]
taking the supremum with respect to $q$ thus yields the desired $\Gamma$-liminf inequality
\[\liminf_p J_{p, \eps_p} (\gamma_p)\ge \Vert c\Vert_{L^{\infty}(\gamma)}=J_{\infty}(\gamma).\]
Let us now prove the $\Gamma$-limsup inequality. For any $\gamma \in \Pi(\mu, \nu)$ we consider $\gamma^\delta$, the block approximation of $\gamma$ at scale $\delta\in (0,1)$ defined by \eqref{eq:blockapprox} below, whose convergence to $\gamma$ is guaranteed by the first inequality in \eqref{errorblock}. By concavity, we first have for $p\geq 1$, 
\[\begin{split}
J_{p, \eps_p}(\gamma^\delta) & \le \Vert c \Vert_{L^p(\gamma^\delta)}+ \eps_p^{\frac{1}{p}} H(\gamma^\delta \vert \mu \otimes \nu)^{\frac{1}{p}}\\
& \le \Vert c \Vert_{L^{\infty}(\gamma^\delta)}+ \eps_p^{\frac{1}{p}} H(\gamma^\delta \vert \mu \otimes \nu)^{\frac{1}{p}.} 
\end{split}\]
Denoting by $\omega$ a modulus of continuity of $c$ on $\spt(\mu)\times \spt(\nu)$, thanks to the first inequality in \eqref{errorblock}, we have
\[ \Vert c \Vert_{L^{\infty}(\gamma^\delta)} \leq \Vert c \Vert_{L^{\infty}(\gamma)}+ \omega(\sqrt{2d}\delta),\]
being $\sqrt{2d}\delta$ the diameter of the cubes of the approximation.
Moreover, by the second inequality in \pref{errorblock}, we have 
\[  H(\gamma^\delta \vert \mu \otimes \nu)^{\frac{1}{p}} \leq d^{\frac{1}{p}} \log(L/\delta)^{\frac{1}{p}}   \]
so if we define $\gamma_p$ as the  block approximation of $\gamma$ at scale $\delta=\frac{1}{p}$ (say), we obtain
\[ \limsup_p J_{p, \eps_p} (\gamma_p) \le J_{\infty}(\gamma)+ \limsup_p \Big( \omega\Big(\frac{\sqrt{2d}}{p}\Big)+  d^{\frac{1}{p}}\eps_p^{\frac{1}{p}} \log(Lp)^{\frac{1}{p}}  \Big)=J_\infty (\gamma),  \]
since we have assumed that $\eps_p^{\frac{1}{p}} \to 0$ as $p\to +\infty$. 

\smallskip

2. Let us now assume that $c\geq 1+\lambda$, the proof of the $\Gamma$-liminf inequality for $J_{p, \eps_p}$ is exactly as above. For $\gamma \in \Pi(\mu, \nu)$ and $\gamma_p$ the 
 block approximation of $\gamma$ at scale $\frac{1}{p}$, we have
 \begin{align} 
 \notag J_{p, \eps_p} (\gamma_p)  & \le \Vert c \Vert_{L^{\infty}(\gamma_p)} \Big(1+  \frac{d  \eps_p\log(L p)}{(1+\lambda)^p}  \Big)^{\frac{1}{p}}\\
\label{estimatecbig} &\le \Big(J_{\infty}(\gamma)+\omega\Big(\frac{\sqrt{2d}}{p}\Big)\Big)  \Big(1+  \frac{d  \eps_p\log(L p)}{(1+\lambda)^p } \Big)^{\frac{1}{p}}
\end{align}
so that, as soon as \pref{condepsp} holds, one has 
\[\limsup_p J_{p, \eps_p} (\gamma_p) \le J_{\infty}(\gamma).\]

\end{proof}

\begin{rem}
Notice that in case $c\ge1+\lambda$ for some $\lambda>0$, $\Gamma$-convergence of $J_{p,\eps_p}$ to $J_\infty$ is guaranteed even  for fastly increasing $\eps_p$  like $\eps_p=p^m(1+\lambda)^p$ with $m\geq 0$.  On the contrary, in the general case,  the condition $\eps_p^{\frac{1}{p}} \to 0$ requires to choose values of $\eps$ way too small to be used in practice for numerical computations. This suggests in practice to rescale the cost  so that it is bounded from below by $1$. 

\end{rem}
\begin{rem} \label{rem:weakerassgammaconv}
We observe that in \eqref{estimatecbig} it is sufficient that $||c||_{L^\infty(\gamma_p)}\ge 1+\lambda$, therefore the conclusion of case 2. in Theorem \ref{th:gammaconvergence} remains valid under the weaker  assumption that  $v_\infty=\min_{\Pi(\mu, \nu)}  J_\infty\ge 1+\lambda$. 
\end{rem}
For the $\Gamma$-limsup inequality, we have used the block approximation introduced in \cite{CDPS}, which is defined as follows:
 \begin{defi}\label{def:blockapprox}
Let $\gamma\in \Pi(\mu, \nu)$. For $\delta>0$ and $k\in \Z^d$, we denote by $Q_k^{\delta}$ the cube $\delta (k+[0,1)^d)$. The block approximation of $\gamma$ at scale $\delta\in (0,1)$
is then defined by
\begin{equation}\label{eq:blockapprox}
	\gamma^{\delta} := \sum_{k, l \in \Z^d \; : \; \mu(Q_k^\delta)>0, \, \nu(Q_l^\delta)>0} {\gamma(Q_k^\delta \times Q_l^\delta)} \mu_k^\delta \otimes \nu_l^\delta
\end{equation}
where $\mu_k^\delta$ and $\nu_l^\delta$  are defined by
\[\mu_k^\delta(A)=\frac{\mu(Q_k^\delta \cap A)}{\mu(Q_k^\delta)}, \; \nu_l^\delta(A)=\frac{\nu(Q_l^\delta \cap A)}{\nu(Q_l^\delta)}\]
for every Borel subset $A$ of $\R^d$. 
\end{defi}

For the sake of completeness, we give a short proof of the properties of the block approximation that we have used in the proof of Theorem \ref{th:gammaconvergence} (see \cite{CDPS} and \cite{CPT} for related results):

\begin{lem}\label{blockapprox}
Let  $\gamma\in \Pi(\mu, \nu)$ and $\gamma^{\delta}$ be the block approximation of $\gamma$ at scale $\delta\in (0,1)$, then  $\gamma^\delta\in \Pi(\mu, \nu)$ and 
\begin{equation}\label{errorblock} 
W_{\infty}(\gamma^\delta, \gamma) \leq \sqrt{2d} \delta, \; H(\gamma^{\delta} \vert \mu \otimes \nu) \leq d \log\Big(\frac{L}{\delta}\Big),
\end{equation}
where $L$ is a constant depending only on $\spt(\mu)$ (actually on its diameter). 
\end{lem}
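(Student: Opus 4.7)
The argument splits into three elementary steps: a Fubini-style marginal check, a block-diagonal coupling for the $W_\infty$ bound, and the identification of the relative entropy with a discrete mutual information.

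I would first verify the marginals. Writing $p_{k,l} := \gamma(Q_k^\delta \times Q_l^\delta)$, a direct computation on \eqref{eq:blockapprox} gives, for every Borel $A \subset \R^d$,
$$\gamma^\delta(A \times \R^d) = \sum_{k,l} p_{k,l}\, \mu_k^\delta(A) = \sum_k \Big(\sum_l p_{k,l}\Big) \mu_k^\delta(A) = \sum_k \mu(Q_k^\delta)\, \mu_k^\delta(A) = \mu(A),$$
using that $\{Q_k^\delta\}_{k \in \Z^d}$ partitions $\R^d$ and $\mu_k^\delta$ is the normalized restriction of $\mu$ to $Q_k^\delta$; the second marginal is handled symmetrically, so $\gamma^\delta \in \Pi(\mu,\nu)$. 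For the $W_\infty$ estimate, I would build a coupling $\Gamma$ between $\gamma^\delta$ and $\gamma$ block by block, by setting on each active block
$$\Gamma_{k,l} := p_{k,l}\,(\mu_k^\delta \otimes \nu_l^\delta) \otimes \bigl(p_{k,l}^{-1}\,\gamma \res (Q_k^\delta \times Q_l^\delta)\bigr)$$
and summing $\Gamma := \sum_{k,l:\, p_{k,l}>0} \Gamma_{k,l}$. By construction $\Gamma$ has marginals $\gamma^\delta$ and $\gamma$, and its support lies in $\bigcup_{k,l}(Q_k^\delta \times Q_l^\delta)^2$, whose fibers have Euclidean diameter $\sqrt{2d}\,\delta$; hence $W_\infty(\gamma^\delta,\gamma) \le \sqrt{2d}\,\delta$.

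For the entropy bound, the density of $\gamma^\delta$ with respect to $\mu \otimes \nu$ is piecewise constant, equal to $p_{k,l}/(a_k b_l)$ on each active block $Q_k^\delta \times Q_l^\delta$, with $a_k := \mu(Q_k^\delta)$, $b_l := \nu(Q_l^\delta)$. Therefore
$$H(\gamma^\delta | \mu \otimes \nu) = \sum_{k,l} p_{k,l} \log\frac{p_{k,l}}{a_k b_l},$$
which is the discrete mutual information of $(p_{k,l})$ with marginals $(a_k)$, $(b_l)$. Using $p_{k,l} \le b_l$ pointwise gives $\log(p_{k,l}/(a_k b_l)) \le -\log a_k$, so $H(\gamma^\delta | \mu \otimes \nu) \le -\sum_k a_k \log a_k$. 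Since $\spt(\mu)$ has finite diameter, at most $N \le (L/\delta)^d$ cubes $Q_k^\delta$ carry positive $\mu$-mass for an $L$ depending only on $\diam \spt(\mu)$, and Shannon's maximum-entropy inequality on an $N$-atom distribution yields $-\sum_k a_k \log a_k \le \log N \le d\log(L/\delta)$. The argument is essentially bookkeeping; the only step needing care is packaging the mutual-information bound with the cube-counting estimate to extract the advertised $d\log(L/\delta)$ scaling.
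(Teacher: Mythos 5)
Your proposal is correct and follows essentially the same route as the paper: the same explicit marginal verification, the same block-diagonal coupling (your $\Gamma$ coincides with the paper's $\eta^\delta$) giving $W_\infty \le \sqrt{2d}\,\delta$ via the diameter of a side-$\delta$ cube in $\R^{2d}$, and the same entropy estimate using $p_{k,l}\le b_l$ to reduce to $-\sum_k a_k\log a_k$, then bounded by $\log N_\delta \le d\log(L/\delta)$ (your appeal to the maximum-entropy bound is exactly the paper's Jensen step). No gaps.
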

\begin{proof}
The fact that $\gamma^\delta\in \Pi(\mu, \nu)$ is easy to check by construction (see  \cite{CDPS}).  Now observe that by \eqref{eq:blockapprox} the density of $\gamma^{\delta}$ with respect to $\mu\otimes\nu$ is 
\[\frac{d\gamma^\delta}{d\mu\otimes\nu}(x,y)=\begin{cases}
	 \frac{\gamma(Q_k^\delta \times Q_l^\delta)}{\mu(Q_k^\delta) \nu(Q_l^\delta)} \quad &\text{if }(x,y)\in Q_k^\delta\times Q_l^\delta, \ \text{and }\mu(Q_k^\delta), \, \nu(Q_j^\delta)>0,\\
	 0 \quad &\text{otherwise}.
\end{cases}\]
Therefore
\[ \begin{split}H(\gamma^\delta\vert\mu\otimes\nu)&= 
 \sum_{k, l \in \Z^d \; : \; \mu(Q_k^\delta)>0, \, \nu(Q_l^\delta)>0}\int_{Q_k^\delta\times Q_l^\delta}\log\left(\frac{\gamma(Q_k^\delta \times Q_l^\delta)}{\mu(Q_k^\delta) \nu(Q_l^\delta)} \right)d\gamma^\delta \\
 &\le\sum_{k, l \in \Z^d \; : \; \mu(Q_k^\delta)>0, \, \nu(Q_l^\delta)>0}\int_{Q_k^\delta\times Q_l^\delta}\log\left(\frac{1}{\mu(Q_k^\delta)} \right)d\gamma^\delta\\
 &=\sum_{k \in \Z^d \; : \; \mu(Q_k^\delta)>0}\mu(Q_k^\delta)\log\left(\frac{1}{\mu(Q_k^\delta)}\right),
\end{split} \]
where the inequality is due to the fact that \(\frac{\gamma(Q_k^\delta \times Q_l^\delta)}{\nu(Q_l^\delta)}\le 1\), while the last equality is obtained summing over $\l$. If $L\geq 1$ is such that $\spt \mu$ is contained in a cube of side $L-1$, the number of cubes $Q_k^\delta$ with positive $\mu$-measure is not greater than $N_\delta:= \left(\frac{L}{\delta}\right)^d$. Therefore, applying Jensen's inequality to the concave function $f(z)=z\log(\frac{1}{z})$, we have
\[\begin{split}
H(\gamma^\delta\vert\mu\otimes\nu)&\le\sum_{k=1}^{N_\delta}\mu(Q_k^\delta)\log\left(\frac{1}{\mu(Q_k^\delta)}\right)\\
&\le N_\delta\left(\frac{1}{N_\delta}\sum_{k=1}^{N_\delta}\mu(Q_k^\delta)\log\left(\frac{1}{\sum_{k=1}^{N_\delta}\frac{1}{N_\delta}\mu(Q_k^\delta)}  \right)\right)\\
&=\log(N_\delta)=d\log(L)-d\log(\delta),
\end{split}\]
which proves the second inequality in \eqref{errorblock}.

\smallskip

 By construction $\gamma(Q_k^\delta\times Q_l^\delta)=\gamma^\delta(Q_k^\delta\times Q_l^\delta)$, for any $k,l$. Let   $J$  be the set of pairs of indices  $(k,l)$ such that $\gamma^\delta(Q_k^\delta\times Q_l^\delta)>0$ and set $\bar{Q}_{j}=Q_k^\delta\times Q_l^\delta$, for any $j=(k,l)\in J$. We define
\[\eta^\delta:=\sum_{j \, : \, \gamma (\bar{Q}_{j})>0}\gamma(\bar Q_j) \gamma_j\otimes\gamma_j^\delta, \]
where $\gamma_j(A):=\frac{\gamma(A\cap \bar{Q}_{j})}{\gamma(\bar Q_j)}$ and $\gamma^\delta_j(A):=\frac{\gamma^\delta(A\cap \bar{Q}_{j})}{\gamma^\delta(\bar Q_j)}$. By construction $\eta^\delta\in\Pi(\gamma,\gamma^\delta)$, thus
\[W_{\infty}(\gamma,\gamma^\delta)\le\vert\vert x-y \vert\vert_{L^{\infty}(\eta^\delta)}\le \diam(\bar Q_j)=\sqrt{2d}\delta.\]
\end{proof}

\section{Selection of plans with $\infty$-cyclically monotone support}\label{sec-icm}

As shown in \cite{CDPJ} and \cite{Jyl}, restrictable minimizers of $J_\infty$ are supported on  $\infty$-cyclically monotone sets,  such sets are defined as follows:

	\begin{defi}
	A set $\Gamma\subset \R^d\times \R^d$ is said to be $\infty$-cyclically monotone if  we have that 
	\[\max_{i=1,\dots,k}\left\{c(x_{i},y_{i})\right\} \le \max_{i=1,\dots,k}\left\{c(x_{i},y_{i+1})\right\},\] 
	for all $k\in\N^*$ and $\left\{(x_i,y_i) \right\}_{i=1}^{k}\subset \Gamma$, where $y_{k+1}=y_1$.
	A transport plan $\gamma$ is said to be $\infty$-cyclically monotone if $\spt\gamma$ is an $\infty$-cyclically monotone set.  
\end{defi}
Since every permutation can be obtained as composition of cycles on disjoint sets and trivial cycles on fixed points, one can see that $\infty$-cyclical monotonicity of a set $\Gamma\subset \R^d\times \R^d$ is equivalent to the fact that for every $k\in\N^*$, every  $\left\{(x_i,y_i) \right\}_{i=1}^{k}\subset \Gamma$ and every $\sigma\in \Sigma(k)$ (where $\Sigma(k)$ is the permutation group of $\{1, \ldots, k\}$), one has
	\[\max_{i=1,\dots,k}\left\{c(x_{i},y_{i})\right\} \le \max_{i=1,\dots,k}\left\{c(x_{i},y_{\sigma(i)})\right\}.\] 
Usually, in the literature, the previous definition is called $\infty$-$c$-cylical monotonicity, to keep notations simple, we have omitted the dependence on the cost $c$;  let us remark that $\infty$-cyclical monotonicity is invariant by replacing $c$ by a strictly increasing transformations of $c$ (like $c^p$ with $p>0$), contrarily to the usual notion of $c$-cyclical monotonicity. We recall that a nonempty subset $\Gamma$  of $\R^d \times \R^d$ is called $c$-cyclically monotone when for every $k\in \N^*$, every $(x_i,y_i)_{i=1}^k\subset\Gamma$ and every permutation $\sigma \in \Sigma(k)$, one has
	\begin{equation}\label{ccmon}
		\sum_{i=1}^k c(x_i, y_i) \leq \sum_{i=1}^k c(x_i, y_{\sigma(i)}).
\end{equation} 
Our goal in this section is to investigate the convergence of the entropic approximation to $\infty$-cyclically monotone plans. We shall make use of the analysis of the landmark recent article \cite{BGN}. Let us first recall the notion of \((c,\eps)\)-cyclically invariance introduced in \cite{BGN}:
\begin{defi}\label{def:cyclinvariance}
	Let $c:\R^d\times\R^d\to(0,\infty)$ be a measurable function. A coupling $\gamma\in\Pi(\mu,\nu)$ is called $(c,\eps)$\textit{-cyclically invariant} if $\gamma\ll\mu\otimes\nu$ and its density admits a representative $\densgamma:\R^d\times\R^d\to(0,\infty)$ such that 
\begin{multline*}
\prod_{i=1}^{k}\densgamma(x_i,y_i)=\\ \exp\left(-\frac1\eps\left[\sum_{i=k}^{k}(c(x_i,y_i)-c(x_i,y_{i+1}))\right] \right)\prod_{i=1}^{k}\densgamma(x_i,y_{i+1}),
\end{multline*}
	for all $k\in\N^*$ and \( \left\{(x_i,y_i) \right\}_{i=1}^{k}\subset \R^d\times\R^d\), where \(y_{k+1}=y_1\).
\end{defi} 
In \cite{BGN} (Proposition 2.2), it is shown that whenever \eqref{EOT}
%\begin{equation}\label{infentrOT}
%	\inf_{\gamma\in \Pi(\mu, \nu)}\int_{\R^d\times \R^d}c \,\text{d}\gamma+\eps H(\gamma|\mu\otimes\nu)
%\end{equation}                      
is finite, the (unique) solution $\gamma_{\eps}$ of \eqref{EOT} %\eqref{infentrOT} 
is characterized by being \((c,\eps)\)-cyclically invariant. The next lemma, which is a part of Lemma 3.1 in \cite{BGN}, provides an estimate for $(c,\eps)$-cyclically invariant couplings, which will be useful for our purpose. For the reader's convenience we provide also here the proof. 
\begin{lem}\label{nutzlemma}
Let $\eps>0$ and $\gamma_{\eps}\in\Pi(\mu,\nu)$ be $(c,\eps)$-cyclical invariant. For every fixed $k\ge 2$, $k\in\N$, and $\delta\ge0$, let $A_{k,c}(\delta)$ be the set defined by
\begin{equation}\label{Adelta}
A_{k, c} (\delta):=\left\{\left(x_i,y_i\right)_{i=1}^k\in \left(\R^d\times\R^d\right)^k \, : \, \sum_{i=1}^{k}c(x_i,y_i)-\sum_{i=1}^{k}c(x_i,y_{i+1})\ge \delta \right\}
\end{equation}
where \(y_{k+1}=y_1\). Let $A\subset A_{k, c}(\delta)$ be Borel. Then $\gamma_\eps^k:=\prod_{i=1}^k(\gamma_\eps)(\text{d}x_i,\text{d}y_i)$ satisfies
\[\gamma_\eps^k(A)\le e^{\frac{-\delta}{\eps}}.\]
\end{lem}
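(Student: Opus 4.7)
The plan is to turn the $(c,\eps)$-cyclical invariance identity into a pointwise density bound on $A$, and then to exploit the fact that a cyclic shift of the $y$-variables leaves the reference product measure $(\mu\otimes\nu)^{\otimes k}$ invariant, which will bound the resulting integral by the total mass of $\gamma_\eps^k$.

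More precisely, let $f$ denote the density of $\gamma_\eps$ with respect to $\mu\otimes\nu$ appearing in Definition \ref{def:cyclinvariance}. The defining identity reads
\[\prod_{i=1}^k f(x_i,y_i) = \exp\!\left(-\frac{1}{\eps}\sum_{i=1}^k\bigl[c(x_i,y_i)-c(x_i,y_{i+1})\bigr]\right)\prod_{i=1}^k f(x_i,y_{i+1}),\]
with $y_{k+1}=y_1$. On $A_{k,c}(\delta)$ the bracketed sum is at least $\delta$, so this yields the pointwise bound
\[\prod_{i=1}^k f(x_i,y_i)\leq e^{-\delta/\eps}\prod_{i=1}^k f(x_i,y_{i+1}).\]
I would then integrate this inequality against $(\mu\otimes\nu)^{\otimes k}$ over $A\subset A_{k,c}(\delta)$; the left-hand side is exactly $\gamma_\eps^k(A)$.

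To handle the resulting integral on the right, I introduce the cyclic shift
\[S\bigl((x_1,y_1),\ldots,(x_k,y_k)\bigr):=\bigl((x_1,y_2),(x_2,y_3),\ldots,(x_k,y_1)\bigr),\]
which merely permutes the i.i.d.\ $\nu$-distributed $y$-coordinates while leaving the $x$-coordinates untouched, so $S$ preserves $(\mu\otimes\nu)^{\otimes k}$. Writing $F(\mathbf{x},\mathbf{y}):=\prod_i f(x_i,y_i)$ for the density of $\gamma_\eps^k$ with respect to $(\mu\otimes\nu)^{\otimes k}$, the integrand on the right is exactly $F\circ S$, and a change of variables gives
\[\int_A F\circ S\, d(\mu\otimes\nu)^{\otimes k} = \int_{S(A)} F\, d(\mu\otimes\nu)^{\otimes k} = \gamma_\eps^k(S(A)) \leq 1,\]
closing the estimate as $\gamma_\eps^k(A)\leq e^{-\delta/\eps}$.

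The argument is essentially a one-shot integration of Definition \ref{def:cyclinvariance}, so I do not expect a real obstacle. The only point needing attention is recognising that the cyclic shift of the $y$-coordinates preserves the reference product measure, which is immediate from the i.i.d.\ structure of the $\nu$-copies and which is what converts the $\prod f(x_i,y_{i+1})$ integral into a probability, hence into the constant $1$.
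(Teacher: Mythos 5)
Your proof is correct and takes essentially the same route as the paper's: the paper also integrates the pointwise density bound coming from $(c,\eps)$-cyclical invariance over $A$ and uses the invariance of $(\mu\otimes\nu)^{\otimes k}$ under the cyclic shift of the $y$-coordinates, its set $\bar A:=\{(x_i,y_{i+1}):(x_i,y_i)\in A\}$ being exactly your $S(A)$, so that the right-hand side becomes $e^{-\delta/\eps}\gamma_\eps^k(\bar A)\le e^{-\delta/\eps}$. Your write-up just makes the change of variables and the measure-preserving shift more explicit than the paper does.
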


\begin{proof}
By Definition \ref{def:cyclinvariance} of $(c,\eps)$-cyclical invariance, for $\gamma_\eps^k$ a.e. $(x_i,y_i)_{i=1}^k\in A$ we have that
	\[\prod_{i=1}^{k}\frac{d\gamma_\eps}{\mu\otimes\nu}(x_i,y_i)\le e^{-\frac\delta\eps}\prod_{i=1}^{k}\frac{d\gamma_\eps}{\mu\otimes\nu}(x_i,y_{i+1}). \] 
In one defines the set $\bar A:=\{(x_i,y_{i+1}) \, : \, (x_i,y_{i})\in A  \}$, by integrating over $A$ with respect to $\gamma_\eps^k=\prod\gamma_\eps(x_i,y_i)=\prod\gamma_\eps(x_i,y_{i+1})$ we obtain 
\[\gamma_\eps^k(A)\le e^{-\frac\delta\eps}\gamma_\eps^k(\bar A)\le e^{-\frac\delta\eps}.\]

\end{proof}

The fact that the entropic approximation procedure selects $\infty$-cyclically monotone plans is then ensured by the following:

\begin{thm}\label{th:inftymon}
	Under the general assumptions of Section \ref{sec-prelim}, further assume that $c>0$ everywhere, and let $\gamma_{p, \eps_p}$ be the minimizer of $J_{p, \eps_p}$.  Then, any weak star cluster point \(\gamma_{\infty} \) as \(p\to\infty\) of the family $\{\gamma_{p, \eps_p}\}_{p\ge 1} $ is $\infty$-cyclically monotone, provided
	\begin{enumerate}
	\item  $\eps_p^{\frac{1}{p}} \to 0$ as $p\to \infty$,
	\item $\eps_p=o(p(1+\lambda)^p)$ if, in addition, $c\geq 1+ \lambda$ with $\lambda\geq 0$. 
	
\end{enumerate}	
		
	\end{thm}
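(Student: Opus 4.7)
The plan is to argue by contradiction. Suppose some weak-$*$ cluster point $\gamma_\infty$ of $\{\gamma_{p, \eps_p}\}$ has support that is not $\infty$-cyclically monotone: there exist $k \geq 2$ and points $(\bar x_i, \bar y_i)_{i=1}^k \subset \spt(\gamma_\infty)$ such that $M := \max_i c(\bar x_i, \bar y_i) > M' := \max_i c(\bar x_i, \bar y_{i+1})$ (with $\bar y_{k+1} = \bar y_1$). In case 1, the hypothesis $c>0$ gives $M>0$. In case 2, since $c \geq 1+\lambda$ everywhere, one has $M' \geq 1+\lambda$, hence $M > 1+\lambda$ strictly. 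I would fix $\eta>0$ with $\eta < (M-M')/3$ and, in case 2, additionally $\eta < (M-1-\lambda)/3$, and use continuity of $c$ to find open neighborhoods $U_i \ni \bar x_i$ and $V_i \ni \bar y_i$ so that on the open product set $\Omega := \prod_{i=1}^k (U_i \times V_i)$,
\[\max_{i} c(x_i, y_i) \geq M - \eta \qquad \text{and} \qquad \max_{i} c(x_i, y_{i+1}) \leq M - 2\eta.\]

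The next step is to identify $\gamma_{p,\eps_p}$ as the entropic optimal transport plan for cost $c^p$ and regularization $\eps_p$: since $t \mapsto t^{1/p}$ is strictly increasing, minimizing $J_{p, \eps_p}$ is equivalent to minimizing $\gamma \mapsto \int c^p\,d\gamma + \eps_p H(\gamma \vert \mu\otimes\nu)$, so Proposition 2.2 of \cite{BGN} gives that $\gamma_{p,\eps_p}$ is $(c^p,\eps_p)$-cyclically invariant. On $\Omega$ the cyclical gap satisfies
\[\sum_{i=1}^k c(x_i, y_i)^p - \sum_{i=1}^k c(x_i, y_{i+1})^p \geq (M-\eta)^p - k (M-2\eta)^p \geq \tfrac{1}{2}(M-\eta)^p\]
for $p$ large, using that $(M-2\eta)/(M-\eta) \in (0,1)$. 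Hence $\Omega \subset A_{k, c^p}(\delta_p)$ with $\delta_p := \tfrac{1}{2}(M-\eta)^p$, and Lemma \ref{nutzlemma} applied to cost $c^p$ and parameter $\eps_p$ yields
\[\gamma_{p, \eps_p}^{k}(\Omega) \leq \exp\!\left(-\frac{(M-\eta)^p}{2 \eps_p}\right).\]

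It remains to show that the right-hand side tends to $0$ under each assumption. In case 1, $\eps_p^{1/p}\to 0$ gives, eventually, $\eps_p < ((M-\eta)/2)^p$, so $(M-\eta)^p/\eps_p \geq 2^p \to \infty$. In case 2, since $M-\eta > 1+\lambda$ by our choice of $\eta$,
\[\frac{(M-\eta)^p}{\eps_p} = \frac{p(1+\lambda)^p}{\eps_p} \cdot \frac{1}{p}\left(\frac{M-\eta}{1+\lambda}\right)^p \longrightarrow +\infty,\]
as the first factor tends to $+\infty$ because $\eps_p = o(p(1+\lambda)^p)$, and the second grows exponentially. Finally, weak-$*$ convergence $\gamma_{p, \eps_p} \to \gamma_\infty$ along the chosen subsequence transfers to the $k$-fold product measures, and the Portmanteau theorem applied to the open set $\Omega$ gives
\[\liminf_p \gamma_{p, \eps_p}^{k}(\Omega) \geq \gamma_\infty^{k}(\Omega) = \prod_{i=1}^k \gamma_\infty(U_i \times V_i) > 0,\]
positivity following from $(\bar x_i, \bar y_i) \in \spt(\gamma_\infty)$. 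This contradicts the exponential decay above, finishing the proof.

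The main obstacle, as I see it, is calibrating $\eta$ in case 2: one needs $\eta$ small enough so that both the continuity selection works (requiring $\eta < (M-M')/3$) and the exponential gain $(M-\eta)^p$ still beats $\eps_p = o(p(1+\lambda)^p)$. This is made possible precisely by the strict inequality $M > 1+\lambda$, which in turn comes from the observation $M > M' \geq 1+\lambda$; without this strictness the argument in case 2 would collapse.
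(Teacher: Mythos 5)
Your proof is correct and follows essentially the same route as the paper's: argue by contradiction at points of $\spt(\gamma_\infty)$, pick neighborhoods on which the cyclical gap for $c^p$ is bounded below, apply the $(c^p,\eps_p)$-cyclical invariance estimate of Lemma \ref{nutzlemma}, and conclude via Portmanteau and the positivity of $\gamma_\infty^k$ on the product neighborhood. The only (harmless) variation is the gap estimate: you bound it directly by $(M-\eta)^p-k(M-2\eta)^p\ge \tfrac12 (M-\eta)^p$, exploiting in case 2 the strict inequality $M>1+\lambda$, whereas the paper uses the uniform convergence of $\big(\sum_i c^p\big)^{1/p}$ to the max together with convexity of $t\mapsto t^p$ to get the lower bound $p\delta b^{p-1}$, which is exactly how the condition $\eps_p=o\big(p(1+\lambda)^p\big)$ is calibrated there (your version in fact needs slightly less than this condition).
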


 \begin{proof}
Up to extracting a subsequence, let us assume  that $\gamma_{p, \eps_p}$ weakly star converges to  $\gamma_\infty$. We proceed by contradiction assuming that there exist $\delta>0$ and a finite sequence of points $\left(x_i,y_i\right)_{i=1}^k$ contained in $\spt\gamma_\infty$, such that \[\max_{i=1,\dots,k}\left\{c(x_i,y_i)\right\}>\max_{i=1,\dots,k}\left\{c(x_i,y_{i+1})\right\}+\delta. \]
By the continuity of the cost function $c$ and by the uniform convergence of  $\left(\sum_{i=1}^{k}c(x'_i,y'_i)^p\right)^{\frac1p}$ to $\max_{i=1,\dots,k}\{c(x'_i,y'_i)\}$, as $p\to+\infty$, we deduce that for every $i=1,\dots,k$ there exists an open neighborhood $U_i$ of $(x_i,y_i)$ and $p(\delta)>0$, such that 
\[\left(\sum_{i=1}^{k}c(x'_i,y'_i)^p\right)^{\frac1p}>\left(\sum_{i=1}^{k}c(x'_i,y'_{i+1})^p\right)^{\frac1p}+\delta,\]
for every $(x'_i,y'_i)\in U_i$ (again with the convention that $y'_{k+1}=y'_1$) and $p\ge p(\delta)$. We now observe that 
\begin{align}
\notag&\sum_{i=1}^{k}c(x'_i,y'_i)^p>\left(\left(\sum_{i=1}^{k}c(x'_i,y'_{i+1})^p\right)^{\frac1p}+\delta\right)^p\\&\ge \sum_{i=1}^{k}c(x'_i,y'_{i+1})^p+p\left(\sum_{i=1}^{k}c(x'_i,y'_{i+1})^p\right)^\frac{p-1}{p}\delta,\label{pdelta}
\end{align}
where the last inequality follows from the convexity of $t\mapsto t^p$, with $p>1$. Since $c>0$ there exists some $b>0$ such that $c\geq b$ on each $U_i$, $i=1, \dots, k$, hence, for every $(x'_i,y'_i)\in U_i$ and $p\ge p(\delta)$
\begin{equation}\label{pdeltab}
\sum_{i=1}^{k}c(x'_i,y'_i)^p>\sum_{i=1}^{k}c(x'_i,y'_{i+1})^p+p\delta b^{p-1}.
\end{equation}
We thus have  $U_1\times\dots\times U_k\subset A_{k, c^p}(p\delta b^{p-1})$, where $A_{k, c^p}(p\delta b^{p-1})$ is defined as in \eqref{Adelta} with $c$ replaced by $c^p$.
Applying Lemma \ref{nutzlemma}, we thus get:
\begin{align}
	\notag&\gamma_{\infty}^k(U_1\times\dots\times U_k):=\prod_{i=1}^k    \gamma_{\infty} (U_i) \\
	\notag&\le\liminf_p\gamma^k_{p,\eps_p}(U_1\times\dots\times U_k):=\prod_{i=1}^k \gamma_{p,\eps_p}  (U_i) \\&
	\le \liminf_p e^{-\frac{p\delta b^{p-1}}{\eps_p}}\label{estimate}
\end{align}  
so that if $\eps_p^{\frac{1}{p}} \to 0$ as $p\to \infty$, for large enough $p$ one has $\eps_p \leq b^p$, which yields
\[\liminf_p e^{-\frac{p\delta b^{p-1}}{\eps_p}}=0.\]
 %\[\frac{p\delta b^{p-1}}{\eps_p}= \frac{p \delta}{b \eps_p e^{-p \log(b)}}\to   \infty \mbox{ as $p\to \infty$}\]
On the other hand, since the points $(x_i,y_i)$ belong to $\spt \gamma_\infty$, we have that  $\gamma_{\infty}^k(U_1\times\dots\times U_k)>0$,  which yields the desired contradiction. This shows the first assertion. Now, if $c\geq (1+\lambda)$  with $\lambda \geq 0$, we can replace $b$ by $(1+\lambda)$ in \pref{estimate} and the same conclusion will be reached as soon as $\eps_p=o(p(1+\lambda)^p)$, proving the second assertion. 
%If $b\ge 1$ the last term in the \eqref{estimate} goes to zero. If $b<1$, $b^p\sim e^{-p}$ for some $\lambda>0$, therefore if we take $\eps_p=e^{-p^\alpha}$ with $\alpha>1$ the last term in the \eqref{estimate} goes to zero as well. \\
%On the other hand, since the points $(x_i,y_i)\in\spt \gamma_\infty$, we have that  $\gamma_{\infty}^k(U_1\times\dots\times U_k)>0$, getting into a contradiction.

%2. In the case $c\ge1+\lambda$, with $\lambda>0$, we clearly have $b\ge1$. Thus
%\begin{equation}\label{pdeltabcbig}
%	\sum_{i=1}^{k}c(x'_i,y'_i)^p>\sum_{i=1}^{k}c(x'_i,y'_{i+1})^p+p\delta
%\end{equation}
% and recalling that $\eps_p=1$ one infers
%\[\gamma_{\infty}^{k}(U_1\times\dots\times U_k)\le \liminf_p e^{-{p\delta}}=0,\]
%running into the same contradiction.

\end{proof}

\begin{rem} %Note that the condition $\eps_p=o(e^{-a p}$ for every $a>0$  is stronger than the condition \pref{condepsp} which guarantees $\Gamma$-convergence. 
Despite what we observed in Remark \ref{rem:weakerassgammaconv} regarding Theorem \ref{th:gammaconvergence}, in the proof of the second assertion of Theorem \ref{th:inftymon}, it does not seem that the condition $c(x,y)\ge1$ for every $(x,y)$ can be weakened to $J_\infty \geq 1$. Note also that the condition  $\eps_p=o(p(1+\lambda)^p)$ is stronger than condition  \pref{condepsp}  that guarantees $\Gamma$-convergence when $c\geq 1+\lambda$. 
\end{rem}

\section{Some estimates on the speed of convergence}\label{sec-speed}

Our aim in this Section is to give some error estimates for $v_p-v_\infty$ where
\begin{equation}\label{defvpvinf}
v_p:=\min_{\gamma\in\Pi(\mu,\nu)}J_p \quad \text{and} \quad v_{\infty}:=\min_{\gamma\in\Pi(\mu,\nu)}J_{\infty},
\end{equation}
where $J_p:=J_{p,1}$ (i.e. for the sake of simplicity we take $\eps_p=1$ as entropic penalization parameter).

\subsection{Upper  bounds}

\begin{prop}[Upper bounds on the speed of convergence]\label{prop:upperbound}
Let $c\in C^{0,\alpha}(\R^d\times\R^d)$, with $\alpha\in (0,1]$ and let us assume that $v_\infty\ge1+\lambda$ for some $\lambda \geq 0$. %Then there exists a constant $B>0$, depending on $\lambda$, such that, for $p$ big enough 
Then we have
\[ v_{p}-v_{\infty}\le    \begin{cases}   O(e^{-\beta p}), \mbox{ with }  \beta=\min\{\alpha, \log(1+\lambda)\} &\mbox{ if $\lambda>0$}\\
O \Big(  \frac{ \log (\log p))}{p} \Big) &\mbox{ if $\lambda=0$}. \end{cases}\]
\end{prop}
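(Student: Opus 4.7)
The plan is to bound $v_p$ from above by evaluating $J_p$ on the block approximation $\bar\gamma^\delta$ (Definition 3.4) of a fixed minimizer $\bar\gamma$ of $J_\infty$, and then to choose the scale $\delta=\delta_p\in(0,1)$ optimally as a function of $p$. Under the H\"older assumption the modulus of continuity of $c$ on $\spt(\mu)\times\spt(\nu)$ satisfies $\omega(r)\le C r^\alpha$, and since $\bar\gamma^\delta\in\Pi(\mu,\nu)$ one automatically has $\|c\|_{L^\infty(\bar\gamma^\delta)}\ge v_\infty\ge 1+\lambda$ (as in Remark \ref{rem:weakerassgammaconv}). Combining Lemma \ref{blockapprox} with the factorization already used in the proof of Theorem \ref{th:gammaconvergence}, case 2, yields the starting estimate
\[
v_p \le J_p(\bar\gamma^\delta) \le (v_\infty+C\delta^\alpha)\Bigl(1+\frac{d\log(L/\delta)}{(1+\lambda)^p}\Bigr)^{1/p}.
\]

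For $\lambda>0$ I would take $\delta_p=e^{-p}$. Using the concavity bound $(1+x)^{1/p}\le 1+x/p$, the entropic factor contributes at most $d\log(L/\delta_p)/\bigl(p(1+\lambda)^p\bigr)=O((1+\lambda)^{-p})=O(e^{-p\log(1+\lambda)})$, while the block approximation error term is $C\delta_p^\alpha=O(e^{-\alpha p})$. Adding these two exponentially decaying contributions gives $v_p-v_\infty=O(e^{-\beta p})$ with $\beta=\min\{\alpha,\log(1+\lambda)\}$, as required.

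The case $\lambda=0$ is more delicate because the denominator $(1+\lambda)^p$ collapses to $1$. The crude inequality $(1+x)^{1/p}-1\le x/p$ would then yield an entropic contribution of order $\log(1/\delta)/p$ which, after optimizing $\delta$, only reaches the rate $O(\log p/p)$. To get down to $\log\log p/p$ one must use the sharper asymptotic $(1+x)^{1/p}-1=e^{\log(1+x)/p}-1\sim\log(1+x)/p$, valid as soon as $\log(1+x)=o(p)$; this reduces the entropic contribution to order $\log\log(L/\delta)/p$. Balancing against the block approximation error $C\delta^\alpha$ leads to the self-consistent scale $\delta_p=(\log\log p/p)^{1/\alpha}$ (up to constants), for which both contributions become $O(\log\log p/p)$ and one recovers the announced rate.

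The main obstacle is the $\lambda=0$ case: the linear bound for $(1+x)^{1/p}$ that suffices for $\lambda>0$ is too lossy to yield an iterated-logarithm rate, and one must pass through the exponential representation together with the rather unusual balancing scale $\delta_p\sim(\log\log p/p)^{1/\alpha}$ that absorbs one level of logarithm. For $\lambda>0$ the argument is essentially routine given Theorem \ref{th:gammaconvergence}; only the identification of the exponent $\beta=\min\{\alpha,\log(1+\lambda)\}$ requires careful tracking of the two sources of error.
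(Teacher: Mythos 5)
Your proof is correct and follows essentially the same route as the paper: evaluate $J_p$ on the block approximation of a minimizer of $J_\infty$, use the entropy bound of Lemma \ref{blockapprox} together with the H\"older modulus and the factorization based on $\|c\|_{L^\infty(\gamma^\delta)}\ge v_\infty\ge 1+\lambda$, then tune $\delta=\delta_p$, with the exponential representation of $(1+x)^{1/p}$ giving the iterated logarithm when $\lambda=0$. The only (immaterial) difference is your scale $\delta_p=(\log\log p/p)^{1/\alpha}$ for $\lambda=0$, where the paper simply takes $\delta_p=p^{-1/\alpha}$, which makes the block error $O(1/p)$ and leaves the entropic term as the dominant $O(\log\log p/p)$ contribution.
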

\begin{proof}
Let $\gamma_{\infty}$ be a minimizer of $J_{\infty}$ and $\gamma^{\delta}$ be the block approximation of $\gamma_{\infty}$ at scale $\delta\in (0,1)$, as defined in \eqref{eq:blockapprox}. We observe that, by construction and by the H\"{o}lder condition on $c$,  denoting by $A$ the $C^{0, \alpha}$ semi-norm of $c$, we first have
 \[||c||_{L^{\infty}(\gamma^{\delta})}\le ||c||_{L^{\infty}(\gamma_{\infty})} + A \delta^{\alpha}.\]
Then 
\begin{align}
\notag&v_p\le \left(\int c^pd\gamma^{\delta}+ H(\gamma^{\delta}\vert\mu \otimes\nu) \right)^{\frac1p}\le \left( ||c||^p_{L^{\infty}(\gamma^{\delta})}+ H(\gamma^{\delta}\vert\mu \otimes\nu) \right)^{\frac1p}\\\notag&\le \left(||c||_{L^{\infty}(\gamma_{\infty})} + A \delta^{\alpha}\right)\left( 1+\frac{H(\gamma^{\delta}\vert\mu \otimes\nu)}{\left(1+\lambda\right)^{p}}    \right)^{\frac1p}\\\label{estimatedelta}&\le \left( v_{\infty} + A \delta^{\alpha}\right)\left( 1+\frac{d\log(L/\delta)}{\left(1+\lambda\right)^{p}} \right)^{\frac1p},
\end{align}
where the last inequality follows from Lemma \ref{blockapprox}. 
For $\lambda>0$, choosing $\delta:=e^{-p}$,  \eqref{estimatedelta} becomes (setting $C=d \log(L)$)
\[v_p\le \left(v_{\infty}+A e^{-\alpha p}\right)\left(1+ \frac{C +dp}{(1+\lambda)^p}  \right)^{\frac1p}, \]
then, we observe that for large $p$, one has 
\[\left(1+ \frac{C+dp}{(1+\lambda)^p}  \right)^{\frac1p} =1+ \frac{d}{(1+\lambda)^p} +o\Big(\frac{1}{(1+\lambda)^p}\Big).\]
%where we can estimate
%\begin{align*}
%&\left(1+ \frac{C+p}{(1+\lambda)^p}\right)^{\frac1p}=\exp\left(\frac1p\log\left(1+ \frac{C+p}{(1+\lambda)^p}\right)\right)\\&=\exp \left(\frac1p\left( \frac{C+p}{(1+\lambda)^p} +o\left(\frac{C+p}{(1+\lambda)^p} \right)\right) \right)\\&=\exp \left( \frac{C}{p(1+\lambda)^p}+\frac{1}{(1+\lambda)^p} +o\left(\frac{C+p}{p(1+\lambda)^p} \right)\right).\\&\le \exp \left(\frac{C_1}{(1+\lambda)^p}\right),
%\end{align*}
%where the last inequality holds for some $C_1>0$ and $p$ big enough.
%Expanding the right term of the inequality we obtain
%\[\exp \left(\frac{C_1}{(1+\lambda)^p}\right)=\left(1+ \frac{C_1}{(1+\lambda)^p} + o\left(\frac{C_1}{(1+\lambda)^p} \right) \right)\le\left(1+ \frac{C_2}{(1+\lambda)^p} \right),\]
%for some $C_2>0$ and $p$ big enough.
Therefore, for  $p$ large enough, 
\[v_p\le v_{\infty}+Be^{-\beta p},\]
for some $B>0$ and $\beta=\min\{\alpha, \log(1+\lambda)\}$.

\smallskip

Now if $\lambda=0$, we choose $\delta=p^{-1/\alpha}$ in \eqref{estimatedelta} which gives
\[\begin{split}
 v_p  &\leq \Big(v_\infty + \frac{A}{p}\Big) \exp\Big(\frac{1}{p} \log(1+  d \log (Lp^{1/\alpha}))\Big)\\
& =v_{\infty} + \frac1\alpha\frac{v_\infty}{p} \log(\log(p)) +o\Big( \frac{ \log(\log(p)}{p}\Big) 
 \end{split} \] 
which ends the proof.
\end{proof}

\subsection{Upper and lower bounds in the discrete case}

Let us now consider the discrete case where  there exist $x_1,\dots, x_N$ and $y_1,\dots,y_M$ points in $\R^d$ such that 
\begin{equation}\label{discretemarg}
\mu= \sum_{i=1}^N\mu_{i}\delta_{x_i} \quad \text{and} \quad \nu=\sum_{j=1}^M\nu_j\delta_{y_j}
\end{equation}
with (strictly, without loss of generality) positive weights $\mu_i$ and $\nu_j$ summing to $1$. To shorten notations let us set $c_{ij}=c(x_i, y_j)\geq 0$. In this setting, transport plans $\gamma$ will simply be denoted as $N\times M$ matrices with entries $\gamma^{ij}$. We also recall that in the discrete setting $\Pi(\mu,\nu)$ is a convex polytope and the constraint $\gamma\in\Pi(\mu,\nu)$ is equivalent to 
\[\gamma\mathbbm{1}_M= \left(\sum_{j=1}^M\gamma^{ij}\right)_i=(\mu_i)_i \ \text{and} \ \gamma^\intercal\mathbbm{1}_N=\left(\sum_{i=1}^N\gamma^{ij}\right)_i=(\nu_j)_j.\]

In the discrete setting transport plans have a finite entropy with respect to $\mu \otimes \nu$, with the (crude) bound
\[H(\gamma \vert \mu \otimes \nu) \leq M:=-\sum_{i=1}^N \mu_i \log(\mu_i)-\sum_{j=1}^N \nu_j \log(\nu_j)\]
for every $\gamma \in \Pi(\mu, \nu)$. So if $v_\infty \geq 1+\lambda$ with $\lambda\geq 0$, taking $\gamma_\infty$ a minimizer of $J_\infty$, we obtain
\[\begin{split}
v_p  &\leq J_p(\gamma_\infty)\leq  v_{\infty} \Big(1+ \frac{M}{(1+\lambda)^p} \Big)^{\frac{1}{p}}\\
&\leq v_\infty \Big(1+ \frac{M}{p(1+\lambda)^p} +o\Big(  \frac{M}{p(1+\lambda)^p}   \Big)    \Big)
\end{split}\]
which gives (in a straightforward way, i.e. without using block approximation) an exponentially decaying upper bound for $v_p-v_\infty$ for $\lambda>0$ and an algebraic upper bound $v_p-v_\infty \leq O(1/p)$ if $\lambda =0$. The fact that $v_\infty \geq 1$ therefore ensures that $p(v_p-v_\infty)$ is bounded from above. It turns out, that in the discrete setting, this condition also guarantees that we also have an algebraically decaying  lower bound for the error. To see this, we first need the following:
\begin{lem}\label{discretelb} 
Let $\mu$ and $\nu$ be discrete measures i.e. of the form \pref{discretemarg} and define 
\[F_\infty:=\{\gamma \in \Pi(\mu, \nu) \; : \;  J_{\infty}(\gamma)=v_{\infty}\}\]
and for every $\gamma\in F_{\infty}$, 
\[m(\gamma):=\max \{ \gamma^{ij} \;  : \; \gamma^{ij}>0, \; c_{ij}= v_{\infty}\} \]
then there is some $\theta >0$ such that $m(\gamma)\geq \theta$, for every $\gamma\in F_{\infty}$. 
\end{lem}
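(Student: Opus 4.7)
The approach is to exploit the fact that, in the discrete setting, $F_\infty$ is a compact convex set in finite dimensions, and to recognize $m$ as the restriction to $F_\infty$ of a continuous, strictly positive function. The desired bound then follows immediately from attainment of the minimum on a compact set.

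First, I would observe that when $\mu, \nu$ are of the form \eqref{discretemarg} one has the closed-form expression $J_\infty(\gamma) = \max\{c_{ij} \; : \; \gamma^{ij} > 0\}$, and that this functional is lower semicontinuous on $\Pi(\mu,\nu)$: if $\gamma_n \to \gamma$ and $(i^\star, j^\star)$ is an index achieving the max in $J_\infty(\gamma)$, then $\gamma_n^{i^\star j^\star} > 0$ for $n$ large by continuity of the projections, so $J_\infty(\gamma_n) \geq c_{i^\star j^\star} = J_\infty(\gamma)$. Setting $I_= := \{(i,j) : c_{ij} = v_\infty\}$ and $I_> := \{(i,j) : c_{ij} > v_\infty\}$, one then gets the more concrete description
\[
F_\infty = \bigl\{\gamma \in \Pi(\mu,\nu) \; : \; \gamma^{ij} = 0 \text{ for every } (i,j) \in I_>\bigr\},
\]
which exhibits $F_\infty$ as a face of the transport polytope, and in particular as a closed, bounded, hence compact, subset of $\R^{N \times M}$.

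Next, I would introduce the auxiliary function $\widetilde{m} : F_\infty \to \R_+$ defined by $\widetilde{m}(\gamma) := \max_{(i,j) \in I_=} \gamma^{ij}$, which is a maximum of finitely many linear maps and therefore continuous. The key point is that $\widetilde{m}$ is strictly positive on $F_\infty$: indeed, if $\gamma^{ij} = 0$ for every $(i,j) \in I_=$, then combined with the vanishing of $\gamma^{ij}$ on $I_>$ (forced by $\gamma \in F_\infty$) the plan $\gamma$ would be supported on pairs with $c_{ij} < v_\infty$, giving $J_\infty(\gamma) < v_\infty$ and contradicting the definition of $v_\infty$ as the minimum of $J_\infty$. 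Since $\widetilde m(\gamma) > 0$, adjoining the zero entries does not affect the maximum over positive entries, so $m(\gamma) = \widetilde{m}(\gamma)$ on $F_\infty$.

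Finally, by compactness of $F_\infty$ and continuity of $\widetilde{m}$, the minimum is attained at some $\gamma^\star$, and $\theta := \widetilde{m}(\gamma^\star) > 0$ by the previous step, yielding $m(\gamma) \geq \theta$ for every $\gamma \in F_\infty$. The only point requiring a little care, which is not really an obstacle, is the lower semicontinuity of $J_\infty$ together with the concrete description of $F_\infty$ as a face of the transport polytope; both use crucially the discreteness of $\mu$ and $\nu$ (so that $\{\gamma^{ij} > 0\}$ is stable under small perturbations of $\gamma$, and the values of $J_\infty$ lie in the finite set $\{c_{ij}\}$). Beyond that, the whole argument is pure finite-dimensional compactness.
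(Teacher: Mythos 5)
Your proof is correct, but it follows a genuinely different route from the paper. You describe $F_\infty$ as the face $\{\gamma\in\Pi(\mu,\nu) : \gamma^{ij}=0 \text{ for } c_{ij}>v_\infty\}$, hence a compact subset of $\R^{N\times M}$, replace $m$ by the continuous function $\widetilde m(\gamma)=\max_{(i,j):c_{ij}=v_\infty}\gamma^{ij}$, check that $\widetilde m>0$ on $F_\infty$ (otherwise $J_\infty(\gamma)<v_\infty$) and that $m=\widetilde m$ there, and conclude by Weierstrass; your care in passing from the possibly discontinuous $m$ to $\widetilde m$ is exactly the point that makes this soft argument work. The paper instead exploits the polytope structure more explicitly: it identifies $F_\infty$ as the face where the nonnegative linear form $l(\gamma)=\sum_{ij}\gamma^{ij}(c_{ij}-v_\infty)_+$ vanishes, invokes Minkowski's theorem to write any $\gamma\in F_\infty$ as a convex combination of the finitely many extreme points $\{\gamma_a\}_{a\in A}$ of $F_\infty$, picks a coefficient $\alpha_{a_0}\ge 1/\vert A\vert$, and obtains the explicit bound $\theta=\min_a m(\gamma_a)/\vert A\vert$. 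Your compactness argument is more elementary (no extreme-point decomposition needed) and transparently pinpoints where discreteness enters (finitely many coordinates, values of $J_\infty$ in the finite set $\{c_{ij}\}$); the paper's argument buys an explicit, in principle computable, value of $\theta$ in terms of the extreme points of the face. Minor cosmetic points only: the lower semicontinuity of $J_\infty$ you mention is not actually needed (closedness of $F_\infty$ already follows from your linear description), and one could note that $F_\infty\neq\emptyset$ (so that the index set $\{c_{ij}=v_\infty\}$ is nonempty) since $J_\infty$ takes finitely many values on $\Pi(\mu,\nu)$; neither affects correctness.
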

\begin{proof}
Since $v_{\infty}$ is the minimum of $J_{\infty}$ over $\Pi(\mu, \nu)$, one can write $F_{\infty}$ as the set of transport plans for which
\[\gamma^{ij}>0 \Rightarrow c_{ij}-v_\infty \leq 0\]
or equivalently
\[l(\gamma):= \sum_{ij} \gamma^{ij} (c_{ij}-v_{\infty})_+ =0.\] 
In other words, $F_\infty$ is the facet of $\Pi(\mu, \nu)$ where the linear form $l$ (which is nonnegative on $\Pi(\mu, \nu)$) achieves its minimum and it is therefore a convex polytope, whose extreme points belong to the (finite) set of extreme points of $\Pi(\mu, \nu)$. Let us then denote by $\{\gamma_a, \, a \in A\}$ with $A$ a finite index set the set of extreme points of $F_{\infty}$.  Thanks to Minkowski's theorem, we can write any $\gamma\in F_{\infty}$ as 
\[\gamma:=\sum_{a\in A} \alpha_a \gamma_a,\]
for some weights $\alpha_a\geq 0$ summing to $1$. In particular we may pick   $a_0 \in A$ with $\alpha_{a_0} \geq \frac{1}{ \vert A\vert}$ (with $\vert A\vert$ denoting the cardinality of $A$). Then we have
\[m(\gamma) \geq \frac{m(\gamma_{a_0})}{\vert A\vert} \geq \theta:= \min_{a \in A} \frac{m(\gamma_a)}{\vert A\vert}>0, \]
where the strict positivity of $\theta$ then follows from the fact that $A$ is finite and $m(\gamma_a )>0$ for every $a\in A$. 
\end{proof}
We are now ready to prove the announced lower bound.

\begin{prop}[Lower bound on the speed of convergence, discrete case]\label{prop:lowerbound}
Assume that $\mu$ and $\nu$ are discrete measures i.e. of the form \pref{discretemarg} and that $v_\infty\geq 1$, then $p(v_p-v_\infty)$ is bounded from below. Hence
\[ v_p-v_\infty =O\Big(\frac{1}{p}\Big).\]

%\[p(v_p-v_{\infty}\ge-\frac{v_{\infty}C}{p},\]
%for $p$ large enough.

%Let $\mu$ and $\nu$ be two discrete probability measures, i.e. there exist $x_1,\dots, x_N$ and $y_1,\dots,y_M$ points in $\R^d$ such that 
%\[\mu= \sum_{i=1}^N\mu^{i}\delta_{x_i} \quad \text{and} \quad \nu=\sum_{j=1}^M\nu^j\delta_{y_j}.  \]
%Let $\gamma_{\infty}:=\sum_{i,j}\gamma_{\infty}^{i,j}\delta_{(x_i,y_j)} \in\argmin_{\Pi(\mu,\nu)}J_{\infty}$. Then there exists a constant $C>0$, such that, for any sequence $(\gamma_p)_p$, $\gamma_p:=\sum_{i,j}\gamma_p^{i,j}\delta_{(x_i,y_j)}$, weakly converging to $\gamma_{\infty}$, we have, for $p$ large enough
%\begin{equation}
%\label{ineq:lowerbound}
%\left(\int c^pd\gamma_{p}\right)^{\frac1p}\ge||c||_{L^{\infty}(\gamma_{\infty})}\left(1-\frac{C}{p} \right). 
%\end{equation}
%\\In particular 
%\[v_p-v_{\infty}\ge-\frac{v_{\infty}C}{p},\]
%where $v_p$ and $v_{\infty}$ are defined as in \eqref{defvpvinf}.
\end{prop}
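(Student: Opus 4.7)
The plan is to combine the non-negativity of the relative entropy, which yields an elementary lower bound on $v_p$, with Lemma~\ref{discretelb}, which pins a uniform amount of mass of every $J_\infty$-minimizer on the finite set of pairs $S:=\{(i,j):c_{ij}=v_\infty\}$. Let $\gamma_p$ denote the (unique) minimizer of $J_p$. The starting point is the elementary chain
\[ v_p^p = \int c^p\,d\gamma_p + H(\gamma_p\vert\mu\otimes\nu) \ge \sum_{(i,j)\in S}c_{ij}^p\,\gamma_p^{ij} = v_\infty^p\sum_{(i,j)\in S}\gamma_p^{ij} \ge v_\infty^p\, m_p, \]
where $m_p:=\max_{(i,j)\in S}\gamma_p^{ij}$, and taking $p$-th roots gives $v_p\ge v_\infty\,m_p^{1/p}$.

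The heart of the argument is then a uniform (in $p$) lower bound on $m_p$. The claim to prove is $\liminf_{p\to\infty}m_p\ge \theta$, where $\theta>0$ is the constant from Lemma~\ref{discretelb}. I would argue by contradiction: if the claim fails along some subsequence $p_k\to\infty$, then $m_{p_k}<\theta/2$. Since $\Pi(\mu,\nu)$ is compact (a finite-dimensional simplex), a further extraction produces coordinate-wise convergence $\gamma_{p_k}\to\bar\gamma$ for some $\bar\gamma\in\Pi(\mu,\nu)$. Now Theorem~\ref{th:gammaconvergence}(2), applicable via Remark~\ref{rem:weakerassgammaconv} since $v_\infty\ge 1$ and the constant choice $\eps_p=1$ trivially satisfies \pref{condepsp}, provides $\Gamma$-convergence of $J_{p,1}$ to $J_\infty$; standard equi-coercivity (again by compactness) forces $\bar\gamma\in F_\infty$. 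Lemma~\ref{discretelb} then furnishes some $(i^*,j^*)\in S$ with $\bar\gamma^{i^*j^*}\ge\theta$, and the coordinate-wise convergence $\gamma_{p_k}^{i^*j^*}\to\bar\gamma^{i^*j^*}$ contradicts $m_{p_k}<\theta/2$.

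Combining $m_p\ge\theta/2$ for $p$ large with the starting inequality, and using the elementary bound $e^x\ge 1+x$, yields
\[ v_p\ge v_\infty(\theta/2)^{1/p}\ge v_\infty\Bigl(1+\tfrac{\log(\theta/2)}{p}\Bigr), \]
so $p(v_p-v_\infty)\ge v_\infty\log(\theta/2)$, which is the claimed finite lower bound. The main (and essentially only non-routine) obstacle is the uniform positivity of $m_p$: a priori the mass that $\gamma_p$ places on the optimal-cost set $S$ could drift to zero as $p\to\infty$, and it is precisely the Minkowski-theorem argument in Lemma~\ref{discretelb} combined with $\Gamma$-convergence of minimizers that rules this out; everything else is formal.
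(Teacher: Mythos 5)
Your proof is correct and follows essentially the same route as the paper: both rest on the nonnegativity of the entropy to get $v_p\ge v_\infty\,(\text{mass on pairs with }c_{ij}=v_\infty)^{1/p}$, on convergence of the minimizers $\gamma_p$ to an element of $F_\infty$ (justified, as you do, by the $\Gamma$-convergence result with $\eps_p=1$ and $v_\infty\ge1$), on Lemma~\ref{discretelb} to secure a uniform mass $\theta$ at some optimal-cost pair, and on $e^x\ge 1+x$ to conclude. The only difference is presentational: the paper phrases the whole argument as a contradiction to $p_n(v_{p_n}-v_\infty)\to-\infty$ and works with a single pair $(i_0,j_0)$, whereas you derive the bound directly after establishing $m_p\ge\theta/2$ for large $p$.
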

\begin{proof}
Let us argue by contradiction and assume that $p(v_p-v_\infty)$ is unbounded from below, then there is a sequence $p_n \to \infty$ as $n \to \infty$ such that
\begin{equation}\label{contradlimit}
\lim_n p_n(v_{p_n}-v_\infty)=-\infty.
\end{equation}
Letting $\gamma_n$ be the minimizer of $J_{p_n}$, passing to a subsequence if necessary, we may assume that $\gamma_n$ converges to some $\gamma_\infty$ which belongs to $F_\infty$  (as defined in Lemma \ref{discretelb}) since $v_\infty \geq 1$. In particular, there exists $i_{0}, j_{0}$ such that 
\[ c_{i_0  j_0}=v_{\infty} \mbox{ and } \gamma_{\infty}^{i_0 j_0} \geq \theta>0,\]
where $\theta$ is the lower bound from Lemma \ref{discretelb}. Since $\gamma_n^{i_0 j_0 }$ converges to $\gamma_{\infty}^{i_0 j_0}$ we have, for large enough $n$, $\gamma_n^{i_0 j_0 } \geq \frac{\theta}{2}$ , hence,  using the fact that $c_{i_0  j_0}=v_{\infty}$ and again the nonnegativity of the entropy
\[\begin{split}
v_{p_n}  & \geq v_{\infty} \Big( \frac{\theta}{2} \Big)^{\frac{1}{p_n}}=v_{\infty} \exp \Big( \frac{1}{p_n} \log \frac{\theta}{2}   \Big)\\
&\ge v_{\infty} \Big(1+  \frac{1}{p_n} \log \frac{\theta}{2}  \Big)  \end{split} \]
which is the desired contradiction to \pref{contradlimit}. 

%	Let $(x_{i_0},y_{j_0})\in\spt\gamma_{\infty}$ such that $c(x_{i_0},y_{j_0})=v_{\infty}$. Since $\gamma_p$ weakly converges to $\gamma_{\infty}$, for $p$ big enough $(x_{i_0},y_{j_0})$ belongs to $\spt \gamma_p$ and thus 
%	\begin{equation*}
%	\left(\int c^pd\gamma_p\right)^{\frac1p}=\left(\sum_{i,j}c^p(x_i,y_j)\gamma_p^{i,j}\right)^{\frac1p}\ge c(x_{i_{0}},y_{j_0})\left(\gamma_p^{i_0,j_0}\right)^{\frac1p}=v_\infty \left(\gamma_p^{i_0,j_0}\right)^{\frac1p}.
%	\end{equation*}
%Again by the weak convergence of $\gamma_p$ to $\gamma_\infty$, $\gamma_p^{i,j}\to\gamma_{\infty}^{i,j}$ for every $(i,j)$, which implies that for $p$ big enough,
%\begin{equation*}
%v_\infty \left(\gamma_p^{i_0,j_0}\right)^{\frac1p}\ge v_{\infty}\left(\frac{\gamma_{\infty}^{i_{0},j_0}}{2}\right)^{\frac1p}=v_{\infty}\left(1+\frac1p\log(\gamma_{\infty}^{i_0,j_0})-\frac{1}{p}\log(2)+o\left(\frac{1}{p}\right) \right).
%\end{equation*}
%	If $\gamma_p$ is the minimizer of $J_p$, we have that  
%	\[v_p-v_{\infty}\ge\frac{v_{\infty}}{p}\log\left(\frac{\gamma_{\infty}^{i_0,j_0}}{2}\right) +o\left(\frac1p\right). \]
\end{proof}
%\begin{rem}
%We would like to stress the fact that in Proposition \ref{prop:lowerbound} we proved a stronger: indeed, as it is clear from the inequality \eqref{ineq:lowerbound}, we did not provide only a lower bound on the convergence of $v_p$ to $v_\infty$, but also of the convergence of $||c||_{L^p(\gamma)}$ to $||c||_{L^\infty(\gamma)}$, for any $\gamma$. For this reason we did not need to assume $c$ to be bounded from below. 
%\end{rem}

\subsection{A large deviations upper bound}\label{ldub}
In this (somehow independent) paragraph, our goal is to discuss a (partial) extension of the large deviations results of \cite{BGN} to the $L^\infty$-optimal transport framework. Considering the Monge-Kantorovich problem \eqref{mongekant}
%\begin{equation}\label{mk}
%\inf_{\pi \in \Pi(\mu, \nu)} \int_{\R^d \times \R^d} c(x,y) d\pi(x,y)
%\end{equation}
it is well-known (see \cite{GangboM}, \cite{Santambrogio}) that the optimality  for \eqref{mongekant} %\pref{mk}  
of a plan $\gamma\in \Pi(\mu, \nu)$ is characterized by a property of $c$-cyclical monotonicity of its support $\Gamma:=\spt(\gamma)$, where $c$-cyclical monotonicity is defined by \eqref{ccmon}.
% where we recall that a nonempty subset $\Gamma$  of $\R^d \times \R^d$ is called $c$-cyclically monotone when for every $k\in \N^*$, every $(x_i,y_i)_{i=1}^k\subset\Gamma$ and every permutation $\sigma \in \Sigma(k)$, one has
%\begin{equation}\label{ccmon}
%\sum_{i=1}^k c(x_i, y_i) \leq \sum_{i=1}^k c(x_i, y_{\sigma(i)}).
%\end{equation}
%For $\eps>0$, the entropic approximation of \eqref{mongekant}
%\pref{mk} 
%with regularization parameter $\eps$ reads
%\begin{equation}\label{mkep}
%\inf_{\pi \in \Pi(\mu, \nu)} \int_{\R^d \times \R^d} c(x,y) d\gamma(x,y)+ \eps H(\pi \vert \mu \otimes \nu).
%\end{equation}
To analyze fine convergence properties of the entropic approximation of \eqref{mongekant}, defined by  \eqref{EOT}, assuming convergence (taking a subsequence if necessary) as $\eps\to 0^+$, of the minimizer $\gamma_\eps$ of \eqref{EOT}
to some $\gamma$ and denoting by $\Gamma$ the $c$-cyclically monotone set $\spt(\gamma)$, the authors of \cite{BGN} introduced 
\[I(x,y):=\sup_{k\ge2}\sup_{(x_i,y_i)_{i=2}^k\subset\Gamma}\sup_{\sigma\in\Sigma(k)} \Big\{\sum_{i=1}^{k}c(x_i,y_i)-\sum_{i=1}^kc(x_i,y_{\sigma(i)}) \Big\}, \; (x,y)\in \R^d\times \R^d\]
with $(x_1, y_1)=(x,y)$.
They  proved that $I$ is a good rate function for the family of optimal entropic plans, $\{\gamma_\eps\}_{\eps>0}$ in the sense that it obeys, under very general conditions, the large deviations principle 
	\[\begin{split}
&\limsup_{\eps\to 0}\eps\log(\gamma_\eps(C))\le -\inf_{(x,y)\in C} I(x,y) \quad \text{and}\\
&\liminf_{\eps\to 0}\eps\log(\gamma_\eps(U))\ge -\inf_{(x,y)\in U}I(x,y),
	\end{split} \]
for every  compact $C$ and  every open $U$ included in $\spt(\mu)\times \spt(\nu)$. Denoting by $\gamma_{p, \eps}$ the minimizer of $J_{p, \eps}$, the results of \cite{BGN} (using $c^p$ instead of $c$) of course apply to the convergence of $\gamma_{p, \eps}$ as $\eps \to 0^+$ for a \emph{fixed} exponent $p$. For $L^\infty$ optimal transport, it makes more sense to rather consider the situation where $\eps>0$ is fixed and $p$ tends to $\infty$. More precisely, we know from Theorem \ref{th:inftymon}, that if $c\geq 1$,  $\eps>0$ is fixed, the family $\{\gamma_{p, \eps}\}_{p\geq 1}$ weakly star converges (again possibly after an extraction) to some $\gamma_{\infty}$ as $p\to \infty$, $\Gamma_\infty:=\spt( \gamma_{\infty})$ is $\infty$-cyclically monotone. 
\smallskip
In addition to the general assumptions of Section \ref{sec-prelim}, we shall further assume throughout this paragraph that
\begin{itemize}
\item $c\geq 1$,
\item $\eps>0$ being fixed, the sequence  of minimizers $\{\gamma_{p, \eps}\}_{p\geq 1}$ weakly star converges as $p\to \infty$ to some $\gamma_{\infty}$, with ($\infty$-cyclically monotone) support $\Gamma_\infty$.
\end{itemize}
Let us define for every $(x,y)\in \R^d\times \R^d$ 
	\[I_{\infty}(x,y):=\sup_{k\ge2}\sup_{(x_i,y_i)_{i=2}^k \subset \Gamma_\infty}\sup_{\sigma\in\Sigma(k)} \Big\{\ \max_{1\le i\le k}\{c(x_i,y_i)\}-\max_{1\le i\le k}\{c(x_i,y_{\sigma(i)})\} \Big\},  \]
	where $(x_1,y_1)=(x,y)$. 	Also define 
	\[\tI_{\infty}(x,y):=\sup_{k\ge2}\sup_{(x_i,y_i)_{i=2}^k \subset \Gamma_\infty}  \Big\{\ \max_{1\le i\le k}\{c(x_i,y_i)\}-\max_{1\le i\le k}\{c(x_i,y_{i+1})\} \Big\},\]
	where $(x_1,y_1)=(x,y)$ and $y_{k+1}=y_1$.
In our supremal optimal transport setting, we cannot really expect that $I_{\infty}$ is a good rate function for $\{\gamma_{p, \eps}\}_{p\geq 1}$; indeed, $\argmin_{\Pi(\mu,\nu)}J_{\infty}$ is  unchanged when replacing $c$ with a strictly increasing function of $c$, while the same does not hold for the function $I_{\infty}$. However it can be interesting to have a better understanding of the function $I_\infty$, which still provides an upper bound for the family $\{\gamma_{p,\eps}\}$ (see Proposition \ref{lem2:upperbound}). 
\begin{lem}\label{propiinfty}
Let $I_\infty$ and $\tI_\infty$ be defined as above, then
\begin{itemize}
\item $I_\infty$ and $\tI_{\infty}$ are related by $I_{\infty}=\max(0, \tI_{\infty})$,
\item  $I_\infty$ and $\tI_{\infty}$ are lower semicontinuous, $I_\infty\geq 0$, $I_{\infty}=0$ on $\Gamma_\infty$,
\item $I_\infty$ and $\tI_{\infty}$ coincide on $(\spt(\mu)\times \R^d) \cup (\R^d \times \spt(\nu))$.
\end{itemize}
\end{lem}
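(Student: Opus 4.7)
The strategy is to first prove the identity $I_\infty=\max(0,\tI_\infty)$ by a cycle-decomposition argument that reduces arbitrary permutations to the cyclic one and exploits the $\infty$-cyclic monotonicity of $\Gamma_\infty$; once this identity is established, the remaining assertions follow easily.

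For the identity, the inequality $I_\infty\ge \tI_\infty$ is immediate by restricting to the cyclic permutation $\sigma:i\mapsto i+1$ (mod $k$), and $I_\infty\ge 0$ follows by taking $\sigma=\id$, which yields a vanishing difference. For the converse inequality, fix $k\ge 2$, $(x_i,y_i)_{i=2}^k\subset\Gamma_\infty$ and $\sigma\in\Sigma(k)$, and decompose $\sigma$ into disjoint cycles. Let $A$ be the cycle containing the index $1$ and $B$ the union of the remaining cycles, and set
\[M_A=\max_{i\in A} c(x_i,y_i),\; M_B=\max_{i\in B} c(x_i,y_i),\; M'_A=\max_{i\in A} c(x_i,y_{\sigma(i)}),\; M'_B=\max_{i\in B} c(x_i,y_{\sigma(i)}).\]
If $M_A\ge M_B$, then the global difference is bounded above by $M_A-M'_A$, which, after a cyclic relabelling of the indices of $A$ starting at $1$, is a valid candidate for $\tI_\infty(x,y)$ when $|A|\ge 2$ and is trivially $0$ when $\sigma(1)=1$. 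If $M_A<M_B$, let $B^*$ be the cycle of $\sigma$ that contains the index where $M_B$ is attained; since $1\notin B^*$, all points $(x_i,y_i)_{i\in B^*}$ lie in $\Gamma_\infty$, and $\infty$-cyclic monotonicity applied to this single cycle gives $\max_{i\in B^*}c(x_i,y_i)\le \max_{i\in B^*} c(x_i,y_{\sigma(i)})$, whence $M_B-M'_B\le 0$ and therefore the global difference is $\le 0$. Combining the two cases yields $I_\infty(x,y)\le \max(0,\tI_\infty(x,y))$.

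The second item is then short. Nonnegativity of $I_\infty$ has just been observed. If $(x,y)\in\Gamma_\infty$, every candidate sequence in the definition of $I_\infty(x,y)$ lies entirely in $\Gamma_\infty$, so the all-permutation equivalent formulation of $\infty$-cyclic monotonicity recalled just after the definition forces $\max_i c(x_i,y_i)-\max_i c(x_i,y_{\sigma(i)})\le 0$, hence $I_\infty(x,y)\le 0$ and, combined with nonnegativity, $I_\infty(x,y)=0$. For the lower semicontinuity, note that for each fixed choice of $(k,(x_i,y_i)_{i=2}^k,\sigma)$ the mapping $(x,y)\mapsto \max_i c(x_i,y_i)-\max_i c(x_i,y_{\sigma(i)})$, with $(x_1,y_1)=(x,y)$, depends continuously on $(x,y)$ (the variable enters only through finitely many continuous evaluations of $c$), so $I_\infty$ and $\tI_\infty$, being suprema of families of continuous functions, are lower semicontinuous.

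Finally, for the third item, by the identity $I_\infty=\max(0,\tI_\infty)$ it suffices to show that $\tI_\infty\ge 0$ on $(\spt(\mu)\times\R^d)\cup(\R^d\times \spt(\nu))$. If $x\in\spt(\mu)$, every neighborhood of $x$ has positive $\mu$-measure and hence positive $\gamma_\infty$-measure, so one can find a sequence $(x_n,y_n)\in\Gamma_\infty$ with $x_n\to x$. Using the definition of $\tI_\infty$ with $k=2$ and $(x_2,y_2)=(x_n,y_n)$ yields
\[\tI_\infty(x,y)\ge \max\bigl(c(x,y),c(x_n,y_n)\bigr)-\max\bigl(c(x,y_n),c(x_n,y)\bigr),\]
and the continuity of $c$ together with $x_n\to x$ forces the right-hand side to tend to $0$, whence $\tI_\infty(x,y)\ge 0$. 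The case $y\in\spt(\nu)$ is completely symmetric, using a sequence $(x_n,y_n)\in\Gamma_\infty$ with $y_n\to y$. The only delicate step in the whole argument is the cycle-decomposition bookkeeping of the second paragraph, and specifically the case split according to whether the maximum of $c(x_i,y_i)$ is attained inside or outside the cycle of the index $1$, since only in the latter case does one genuinely need the full $\infty$-cyclic monotonicity of $\Gamma_\infty$.
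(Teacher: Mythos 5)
Your proof is correct and follows essentially the same route as the paper: a cycle decomposition of $\sigma$ combined with $\infty$-cyclic monotonicity of $\Gamma_\infty$ on the cycles not containing the index $1$ for the identity $I_\infty=\max(0,\tI_\infty)$, suprema of continuous functions for lower semicontinuity, and exhibiting points of $\Gamma_\infty$ lying above $x\in\spt(\mu)$ (resp.\ below $y\in\spt(\nu)$) for the last item. The only cosmetic differences are your two-case split on where the maximum is attained (the paper instead bounds the global difference by the maximum of the per-orbit differences) and your use of an approximating sequence $(x_n,y_n)\in\Gamma_\infty$ with $x_n\to x$ where the paper, by compactness of $\Gamma_\infty$, picks an exact point $(x,y')\in\Gamma_\infty$.
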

\begin{proof}
The fact that $I_\infty \geq \max(0, \tI_{\infty})$ is obvious as well as the fact that $\tI_{\infty}= 0$ on $\Gamma_\infty$. 
\\We now prove the converse inequality. Fix now $(x,y)=(x_1,y_1) \in \R^d \times \R^d$, $k\geq 2$, $(x_2, y_2), \ldots (x_k, y_k)$ in $\Gamma_{\infty}$ and $\sigma \in \Sigma(k)$. We can then partition $\{1, \ldots, k\}$ into $I_0$ the (possibly empty) set of fixed-points of $\sigma$ and disjoint (empty if $\sigma$ is the identity) orbits $I_1, \ldots, I_l$ on each of which $\sigma$ is a cycle, this means that for $j=1, \ldots, l$,  we may denote $(x_i, y_i)_{i\in I_j}$ as $(\tx^j_r,\ty^j_r)_{r=1, \ldots, \vert I_j\vert}$ and  $(x_i, y_{\sigma(i)})_{i\in I_j}$ as $(\tx^j_r,\ty^j_{r+1})_{r=1, \ldots, \vert I_j\vert}$  with the convention $\ty^j_{\vert I_j \vert+1}=\ty^j_{1}$. We now observe that 
\[\max_{1\le i\le k}\{c(x_i,y_i)\}-\max_{1\le i\le k}\{c(x_i,y_{\sigma(i)})\} \leq \max_j \Big\{\max_{i\in I_j} c(x_i, y_i)- \max_{i\in I_j}  c(x_i, y_\sigma(i)) \Big\}.  \]
where the max with respect to $j$ is taken on indices for which $I_j$ is nonempty.
  To shorten notations, for such a $j$ let us set 
 \[\beta_j:=\max_{i\in I_j} c(x_i, y_i)- \max_{i\in I_j}  c(x_i, y_\sigma(i)).\] 
 Of course if $I_0$ is nonempty, $\beta_0=0$, now if $j\geq 1$ and $I_j$ is nonempty
 \[\beta_j =\max_{r=1, \ldots, \vert I_j\vert} c(\tx^j_r, \ty^j_r)- \max_{r=1, \ldots, \vert I_j\vert} c(\tx^j_r, \ty^j_{r+1})\leq \tI_{\infty}(\tx_1^j, \ty_1^j).\]
 So, if $(\tx_1^j, \ty_1^j)=(x_1, y_1)$, $\beta_j \leq \tI_{\infty}(x,y)$ and if $(\tx_1^j, \ty_1^j)\neq (x_1, y_1)$, then $(\tx_1^j, \ty_1^j)\in \Gamma_{\infty}$, hence $\tI_{\infty}(\tx_1^j, \ty_1^j)= 0$ by the definition of $\tI_\infty$ and the fact that $\Gamma_\infty$ is $\infty$-cyclically monotone. In other words, we can bound from above each $\beta_j$ by $\max(0,\tI_\infty(x,y))$. Taking suprema with respect to $k$,  $(x_2, y_2), \ldots (x_k, y_k)$ in $\Gamma_{\infty}$ and $\sigma \in \Sigma(k)$, we thus get  $I_{\infty}\leq \max(0, \tI_{\infty})$. Moreover, since $\tI_\infty \leq 0$ on $\Gamma_\infty$, $I_{\infty}= \max(0, \tI_{\infty})=0$ on $\Gamma_{\infty}$
\smallskip
\\Lower semi continuity of $I_\infty$ and $\tI_{\infty}$ follows from the continuity of $c$. Finally assume that $x\in \spt(\mu)$ and $y\in \R^d$, since $\Gamma_\infty=\spt(\gamma_\infty)$ is compact and $\gamma_\infty\in \Pi(\mu, \nu)$, there exists $y'\in \R^d$ such that $(x,y')\in \Gamma_\infty$. Taking $(x_1,y_1)=(x,y)$, $(x_2, y_2)=(x,y')$ as a competitor in the definition of $\tI_{\infty}(x,y)$ we see that $\tI_{\infty}(x,y)\geq 0$ hence $I_{\infty}(x,y)=\tI_{\infty}(x,y)$. The same argument shows that $I_\infty$ and $\tI_\infty$ coincide on $\R^d\times \spt(\nu)$. 
\end{proof}

\begin{lem}\label{lem:uppbound}
	Let us fix $(x,y)\in \R^d\times \R^d$. Suppose that for some $\delta\in \R$, $k\in \N$, $k\geq 2$ and  $(x_i,y_i)_{i=2}^k\subset \spt\gamma_{\infty}$,  we have
	\[\max_{1\le i\le k}\{c(x_i,y_i)\}-\max_{1\le i\le k}\{c(x_i,y_{i+1})\}>\delta, \ \text{where }(x_1,y_1):=(x,y). \]
	Then there exist $\alpha>0$, $r>0$ and $p_0\geq 1$ such that 
	\[\gamma_{p,\eps}(B_r(x,y))\le\alpha e^{\frac{-p\delta}{\eps}}, \ \forall p\geq p_0,  \]
	where $\gamma_{p,\eps}$ is the minimizer of $J_{p, \eps}$.
\end{lem}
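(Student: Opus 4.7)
The strategy closely parallels the proof of Theorem \ref{th:inftymon}, with two refinements: (a) we exploit the strict inequality in the hypothesis to gain a slack that absorbs the losses coming from continuity and from passing from $\max$ to $\ell^p$-norms, and (b) instead of deriving a contradiction from a vanishing product, we isolate the desired factor $\gamma_{p,\eps}(B_r(x,y))$ by dividing the product bound from Lemma \ref{nutzlemma} by the remaining factors $\gamma_{p,\eps}(U_i)$, $i\ge 2$, each of which stays bounded below by a positive constant thanks to the assumption $(x_i,y_i)\in \spt\gamma_\infty$.

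\textbf{From a max-gap to an $\ell^p$-gap.} Choose $\delta_0$ with $\delta<\delta_0<\max_i c(x_i,y_i)-\max_i c(x_i,y_{i+1})$. By continuity of $c$ one can pick open neighborhoods $U_i$ of $(x_i,y_i)$, with $U_1=B_r(x,y)$ for $r>0$ small enough, so that
\[\max_{1\le i\le k}c(x_i',y_i')-\max_{1\le i\le k}c(x_i',y_{i+1}')>\delta_0\]
whenever $(x_i',y_i')\in U_i$ for every $i$ (with $y_{k+1}'=y_1'$). The two-sided bound $\max_i t_i\le(\sum_i t_i^p)^{1/p}\le k^{1/p}\max_i t_i$ on bounded subsets of $[0,\infty)^k$ shows that, up to enlarging $p$, one may replace the $\max$'s by $\ell^p$-norms while losing at most an arbitrarily small amount. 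Fixing in advance $\delta_1\in(\delta,\delta_0)$, this gives
\[\Big(\sum_{i=1}^k c(x_i',y_i')^p\Big)^{1/p}-\Big(\sum_{i=1}^k c(x_i',y_{i+1}')^p\Big)^{1/p}>\delta_1\]
on $U_1\times\cdots\times U_k$ for $p$ large. The convexity step from \eqref{pdelta}--\eqref{pdeltab}, together with the standing assumption $c\ge 1$ (so one can take $b=1$), then upgrades this into the linear gap
\[\sum_{i=1}^k c(x_i',y_i')^p-\sum_{i=1}^k c(x_i',y_{i+1}')^p>p\delta_1,\]
i.e. $U_1\times\cdots\times U_k\subset A_{k,c^p}(p\delta_1)$ in the notation of \eqref{Adelta}.

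\textbf{Closing the estimate.} Since $\gamma_{p,\eps}$ minimizes \eqref{EOT} with cost $c^p$, it is $(c^p,\eps)$-cyclically invariant, so Lemma \ref{nutzlemma} applied to $c^p$ yields
\[\prod_{i=1}^k\gamma_{p,\eps}(U_i)=\gamma_{p,\eps}^k(U_1\times\cdots\times U_k)\le e^{-p\delta_1/\eps}.\]
For $i\ge 2$, the hypothesis $(x_i,y_i)\in\spt\gamma_\infty$ forces $\gamma_\infty(U_i)>0$, and the portmanteau theorem combined with $\gamma_{p,\eps}\weakstarto\gamma_\infty$ gives $\liminf_p\gamma_{p,\eps}(U_i)\ge\gamma_\infty(U_i)$. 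Hence there is $p_0\ge 1$ such that $\gamma_{p,\eps}(U_i)\ge\gamma_\infty(U_i)/2$ for all $i\ge 2$ and $p\ge p_0$. Dividing and absorbing the extra factor $e^{-p(\delta_1-\delta)/\eps}\le 1$ into the constant, we conclude
\[\gamma_{p,\eps}(B_r(x,y))\le\alpha\,e^{-p\delta/\eps},\qquad p\ge p_0,\]
with $\alpha:=2^{k-1}\big(\prod_{i=2}^k\gamma_\infty(U_i)\big)^{-1}$.

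\textbf{Main obstacle.} There is no genuinely new analytic ingredient beyond what already appears in the proof of Theorem \ref{th:inftymon}; the delicate point is correctly apportioning slack $\delta<\delta_1<\delta_0$ so that the three sources of loss (uniform continuity of $c$, the $\ell^p$--$\ell^\infty$ comparison, and the first-order convexity estimate for $t\mapsto t^p$) are each covered, and in invoking the portmanteau theorem for open sets to turn weak star convergence into effective lower bounds on the non-target factors $\gamma_{p,\eps}(U_i)$ for $i\ge 2$.
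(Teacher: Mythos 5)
Your proof is correct and follows essentially the same route as the paper's: turn the max-gap into an $\ell^p$-gap on small neighborhoods, use $c\ge 1$ and convexity to obtain a gap of size $p\delta$, apply Lemma \ref{nutzlemma} to the $(c^p,\eps)$-cyclically invariant minimizer, and divide out the factors $\gamma_{p,\eps}(U_i)$, $i\ge 2$, which stay bounded below by weak star convergence since $(x_i,y_i)\in\spt\gamma_\infty$. The only (harmless) omission is the degenerate case where the max-gap itself is nonpositive, so that $\delta_1$ cannot be chosen positive and the convexity/Lemma \ref{nutzlemma} step does not apply as written; the paper dispatches this at the outset by noting that for $\delta\le 0$ one may simply take $\alpha=1$, and your argument should include the same reduction (or explicitly take $\delta_1>0$ after that reduction).
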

\begin{proof}
Of course if $\delta \leq 0$, one can just take $\alpha =1$ so we may assume that $\delta>0$. Reasoning as in  the proof of Theorem \ref{th:inftymon} (recall that we have assumed $c\geq 1$), we know that there exist $p_0$ and $r>0$ such that 
	\[\sum_{i=1}^{k}c^p(x'_i,y'_i)-\sum_{i=1}^kc^p(x'_i,y'_{i+1})> p\delta,  \]
	for every $p\geq p_0$ and $(x'_i,y'_i)_{i=1}^k\subset B_r(x_1,y_1)\times\cdots\times B_r(x_k,y_k)$. 
	Then $B_r(x_1,y_1)\times\cdots\times B_r(x_k,y_k)\subset A_{k, c^p}(p\delta)$ so, thanks to Lemma \ref{nutzlemma},
	\[\gamma^k_{p,\eps}(B_r(x_1,y_1)\times\cdots\times B_r(x_k,y_k))\le e^{-\frac{p\delta}{\eps}}.\]
	Moreover $\liminf_{p\to\infty}\gamma_{p,\eps}(B_r(x_i,y_i))\ge\gamma_{\infty}(B_r(x_i,y_i))>\beta$, for all $2\le i\le k$, for some $\beta>0$ since $(x_i,y_i)_{i=2}^k\subset\spt\gamma_{\infty}, $ then
	\[\gamma_{p,\eps}(B_r(x,y))\le \left(\frac{\beta}{2}\right)^{1-k}e^{-\frac{p\delta}{\eps}}, \]
	for all $p \geq p_0$ (possibly replacing $p_0$ with a larger one).
\end{proof}
\begin{prop}\label{lem2:upperbound}
Under the assumptions of this paragraph,  for any compact set $C\subset \R^d\times \R^d$, one has
	\[\limsup_{p\to\infty}\frac{\eps}{p}\log\gamma_{p,\eps}(C)\le -\inf_{ C \cap (\spt(\mu)\times \spt(\nu))} \tI_\infty \le   -\inf_{ C}I_{\infty}. \]
	\end{prop}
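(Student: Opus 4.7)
The plan is to deduce the large deviations upper bound from the pointwise exponential estimate of Lemma \ref{lem:uppbound} via a standard compactness/covering argument, and then to derive the second inequality from Lemma \ref{propiinfty}.

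First I would reduce to a compact subset of $\spt(\mu)\times\spt(\nu)$. Since $\gamma_{p,\eps}\in\Pi(\mu,\nu)$ has support contained in $\spt(\mu)\times\spt(\nu)$, we have $\gamma_{p,\eps}(C)=\gamma_{p,\eps}(D)$ where $D:=C\cap(\spt(\mu)\times\spt(\nu))$ is compact. Set $m:=\inf_{D}\tI_\infty\in(-\infty,+\infty]$. If $m\le 0$, the conclusion is trivial since $\log\gamma_{p,\eps}(C)\le 0$. Assume therefore $m>0$ and fix an arbitrary $0<\delta<m$.

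Next, for each $(x,y)\in D$, by the very definition of $\tI_\infty$ and the strict inequality $\tI_\infty(x,y)>\delta$, there exist $k\ge 2$ and $(x_i,y_i)_{i=2}^{k}\subset\Gamma_\infty$ such that, setting $(x_1,y_1)=(x,y)$ and $y_{k+1}=y_1$,
\[
\max_{1\le i\le k}c(x_i,y_i)-\max_{1\le i\le k}c(x_i,y_{i+1})>\delta.
\]
Lemma \ref{lem:uppbound} then yields a radius $r(x,y)>0$, a constant $\alpha(x,y)>0$ and some $p_0(x,y)\ge 1$ such that $\gamma_{p,\eps}(B_{r(x,y)}(x,y))\le \alpha(x,y)e^{-p\delta/\eps}$ for all $p\ge p_0(x,y)$. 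By compactness of $D$, extract a finite subcover $\{B_{r_j}(x_j,y_j)\}_{j=1}^{N}$ with associated constants $\alpha_j$ and $p_{0,j}$. Summing these bounds gives, for every $p\ge \max_j p_{0,j}$,
\[
\gamma_{p,\eps}(C)=\gamma_{p,\eps}(D)\le\Big(\sum_{j=1}^{N}\alpha_j\Big)e^{-p\delta/\eps}.
\]
Taking logarithms, multiplying by $\eps/p$ and passing to the limsup, the constant prefactor disappears and we obtain $\limsup_{p\to\infty}\frac{\eps}{p}\log\gamma_{p,\eps}(C)\le-\delta$. Letting $\delta\nearrow m$ gives the first inequality.

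Finally, the second inequality follows directly from Lemma \ref{propiinfty}: since $D\subset\spt(\mu)\times\spt(\nu)$, one has $\tI_\infty=I_\infty$ on $D$, whence $\inf_D\tI_\infty=\inf_D I_\infty\ge\inf_C I_\infty$, i.e.\ $-\inf_D\tI_\infty\le-\inf_C I_\infty$. I do not expect a genuine obstacle here; the one point to be careful with is that the suprema defining $\tI_\infty$ must attain values strictly above $\delta$ at \emph{every} point of the compact set $D$ in order to feed Lemma \ref{lem:uppbound}, but this is exactly ensured by the strict choice $\delta<\inf_D \tI_\infty$, after which finite covering does the rest.
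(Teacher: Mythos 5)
Your proof is correct and follows essentially the same route as the paper: restrict to $C\cap(\spt(\mu)\times\spt(\nu))$, apply the pointwise exponential bound of Lemma \ref{lem:uppbound} obtained from the definition of $\tI_\infty$, conclude by compactness, and get the second inequality from Lemma \ref{propiinfty}. The only (harmless) difference is that you work with a single uniform level $\delta<\inf \tI_\infty$ and let $\delta$ increase to the infimum, whereas the paper uses the pointwise truncation $\min(\eta^{-1},\tI_\infty(x,y))-\eta$ and lets $\eta\to 0^+$; both handle the case $\tI_\infty=+\infty$ correctly.
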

	\begin{proof}
	First note that since $\gamma_{p, \eps}$ is supported on $\spt(\mu)\times \spt(\nu)$, 
	\[\gamma_{p,\eps}(C)=\gamma_{p,\eps}(C \cap (\spt(\mu)\times \spt(\nu)))\]
	and there is noting to prove if $C$ is disjoint from
	$\spt(\mu)\times \spt(\nu)$. Therefore we can assume that $C\cap( \spt(\mu)\times \spt(\nu)) \neq \emptyset$. It then follows from Lemma \ref{propiinfty} that
	\[\inf_{ C \cap (\spt(\mu)\times \spt(\nu))} \tI_\infty  = \inf_{ C \cap (\spt(\mu)\times \spt(\nu))} I_\infty  \geq \inf_{ C} I_\infty.\]
	Now let  $\eta>0$ and $(x,y)\in C\cap (\spt(\mu)\times \spt(\nu))$. By definition of $\tI_\infty(x,y)$ there exist $k\ge 2$ and $(x_i,y_i)_{i=2}^k\subset \Gamma_\infty$, such that (setting as usual $(x_1,y_1)=(x,y)$ and $y_{k+1}=y$)
		\[\max_{1\le i\le k}\{c(x_i,y_i)\}-\max_{1\le i\le k}\{c(x_i,y_{i+1})\}> \min( \eta^{-1}, \tI_{\infty}(x,y))-\eta.
		\]
		Note that the truncation is used to handle the case where $\tI_\infty(x,y)=+\infty$. 
		By Lemma \ref{lem:uppbound} we know that there exist $\alpha,r>0$ such that 
		\[\gamma_{p,\eps}(B_r(x,y))\le \alpha\exp\left(\frac{-p( \min( \eta^{-1}, \tI_{\infty}(x,y))-\eta)}{\eps}\right).\]
		Then 
		\[\limsup_{p\to\infty}\frac{\eps}{p}\log\gamma_{p,\eps}(B_r(x,y))\le - \min( \eta^{-1}, \tI_{\infty}(x,y))+\eta  \]
		and, by compactness of $C$,
		\[\limsup_{p\to\infty}\frac{\eps}{p}\log\gamma_{p,\eps}(C)\le -\inf_{C \cap (\spt(\mu)\times \spt(\nu))  } \min(\eta^{-1}, \tI_{\infty})+\eta \]
		which, letting $\eta\to 0^+$, yields the desired upper bound. 
	\end{proof}

\section{Numerical results}\label{sec-num}
In this section, we present several  numerical examples, with the aim of illustrating the discussions and theoretical analysis of the previous sections. We shall consider discrete marginals; let $N,M\in\N$, with a slight abuse of notation, we will denote by $\mu$ and $\nu$ both the measures and the vectors of weights  $(\mu_i)_{i=1}^N$ and $(\nu_j)_{j=1}^M$ and  $\gamma$ will denote both the transport plan and the $N\times M$ matrix $(\gamma^{ij})$.  For fixed $p,\eps>0$, in this discrete setting, the minimization of   $J_{p,\eps}$ reads
\begin{equation}\label{pepsDEOT}
	\min_{\Pi(\mu,\nu)}	\left(\sum_{i,j}\gamma^{ij} c^p_{ij}+\eps   \sum_{ij} \gamma^{ij} \log \Big( \frac{\gamma^{ij}}{\mu_i \nu_j }\Big)   \right)^{\frac1p}.
\end{equation}
Raising the above cost to the power $p$, which does not change the minimizer, leads to  a standard entropic transport problem. For such problems, we used in all our examples Sinkhorn's algorithm (see for instance Chapter 4 in \cite{CP19}) to find a good approximation (with error smaller than $10^{-5}$) of the solution.

\smallskip

If $v_\infty\ge 1$, in light of Theorem \ref{th:gammaconvergence}, we expect the output $\gamma$ of the Sinkhorn algorithm to be, for suitable $p$ and $\eps$, also a good approximation of an optimal plan for the discretized $L^\infty$- optimal transport problem
\[v_{\infty}:=\min_{\gamma\in \Pi(\mu,\nu)}\max_{i,j}\left\{ c_{i,j}\, : \, \gamma^{ij}\neq 0 \right\}.\]
Furthermore, if $c\ge1$, thanks to Theorem \ref{th:inftymon}, we expect to find a plan close to an $\infty$-cyclically monotone one.

\begin{rem}\label{rem:vinftybyhands}
As the set of transport plans $\Pi(\mu, \nu)$ is a convex polytope, for any $\gamma\in \Pi(\mu,\nu)$ there exists a finite set of indices $S$, such that $\gamma=\sum_{s\in S} a_s\gamma_s$, with $a_s > 0$, $\sum a_s=1$ and $\gamma_s$ an extreme point of $\Pi(\mu, \nu)$. If $N=M$ and $\mu_i=\nu_j=\frac1N$, the set $\Pi(\mu,\nu)$ is the set of the so-called \textit{bi-stochastic} matrices, whose extreme points, by Birkhoff's theorem, form the set of  pemutation matrices. 
%\begin{equation}\label{eq:permutationmatrix}
%	\Sigma_n:=\{\gamma \, : \, \text{there exists } \sigma \in \mathcal{S}_N \ \text{such that } \gamma^{ij}=\frac1N\delta_{j,\sigma(i)} \}.
%\end{equation}
We observe that, by definition of $\gamma-\ess$, $J_\infty(\gamma)= \max_{s\in S} J_{\infty}(\gamma_s)$ and thus the minimum of $J_\infty$ is attained at some permutation matrix. Therefore, if $N=M$ and $\mu_i=\nu_j=\frac1N$
\begin{equation*}
	v_\infty=\min_{\sigma\in\Sigma(N)}\max_{i}c_{i,\sigma(i)}.
\end{equation*}
This can be in principle used to compute $v_\infty$ exactly. However this is not particularly useful in practice; regarding for instance the example on bottom of Figure \ref{fig:composition_examples_4}, even if the size of $\mu$ and $\nu$ is the same, in order to calculate the exact value of $v_\infty$ we should be able to perform $100!$ evaluations, which is infeasible in practice!
 \end{rem}

%\textbf{Notation.}
%In all this section, with a slight abuse of notation, we will call $\mu$ and $\nu$ both the measures and the vectors of weights respectively  $(\mu_i)_i$ and $(\nu_j)_j$, as well as $\gamma$ will represent both the plan and the matrix $(\gamma^{ij})$. In the captions of the figures, however, the vectors $(\mu_i)$ and $(\nu_j)$ will be denoted by $a$ and $b$ and $P$ will indicate the output matrix of the Sinkhorn algorithm, associated to the plan $\gamma$ which approximates the optimal solution $\gamma_{p,\eps}$ of \eqref{pepsDEOT}.\\ \\
 All the examples in this section, will be in dimension $d=2$, $\mu$ will be represented by blue points, $\nu$ by red points and the plan will be represented by arrows:
the black ones indicate that a blue point is sent to a red point with high probability, while the gray ones indicate that a blue point is sent to a red point with lower probability (but still not negligible). 
 
\begin{figure}[H]
	\centering
	\includegraphics[scale=0.3]{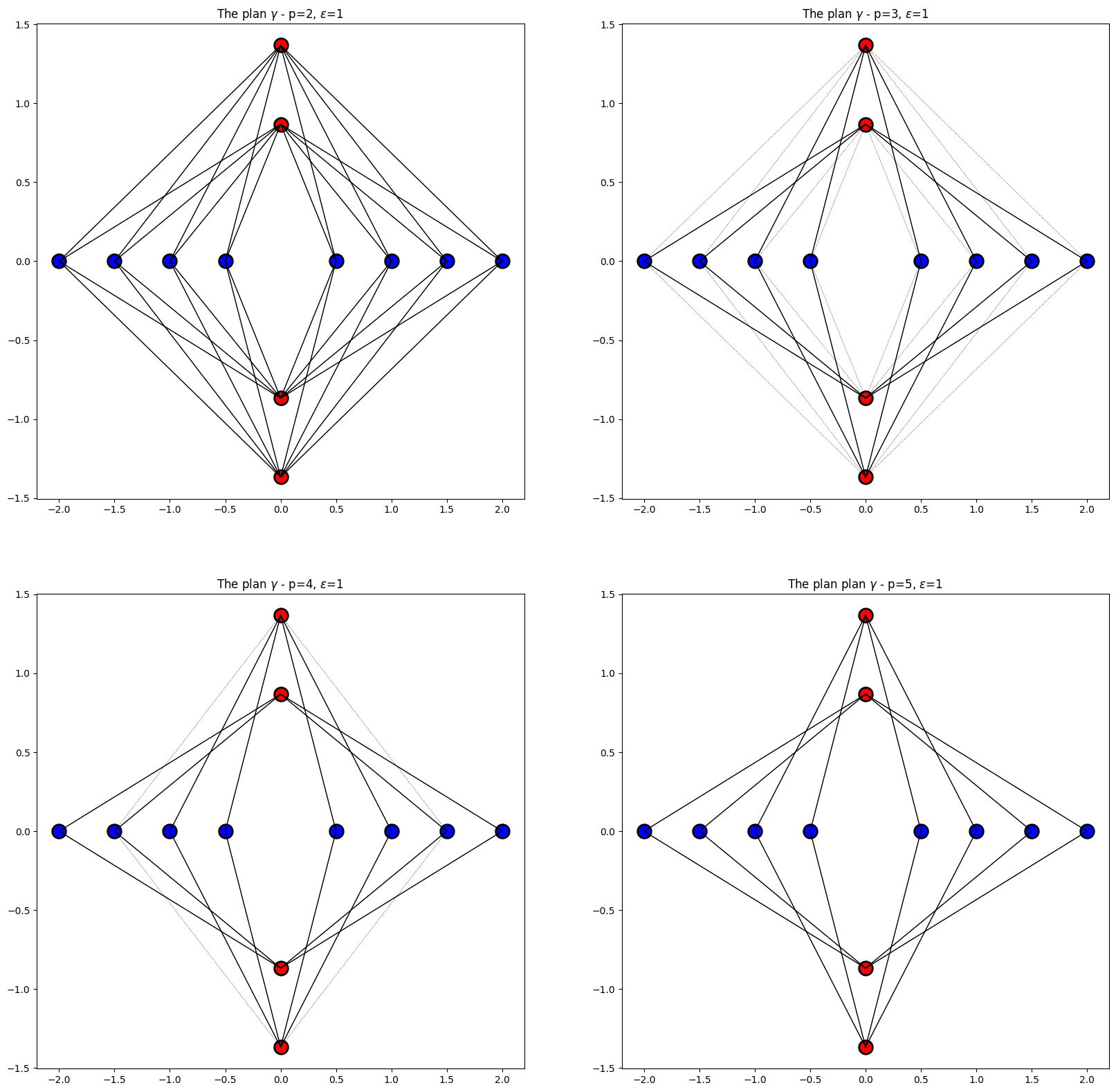}
	\caption{Example of convergence of the plan to the $\infty$-cm plan: $c(x,y)=|x-y|^p$, $p\in\{2,3,4,5\}$, $\eps=1$ and $\mu$ and $\nu$ having orthogonal supports.}
	\label{fig:exdiscrete}
\end{figure}
\noindent In the first example, as shown by Figure \ref{fig:exdiscrete}, we consider $c^p=|x-y|^p$, for $p\in\{2,3,4,5\}$, $\mu$ which is uniformly concentrated on the blue points \[\{(-2,0), (-1.5,0), (-1,0), (-0.5,0),(0.5,0), (1,0), (1.5,0), (2,0)\}\] and $\nu$ on the red points \[\{(0,-1.367), (0,-0.867), (0,867), (0,1.367)\}.\]
Note that with this choice of $\spt\mu$ and $\spt\nu$, $c\ge1$ everywhere and therefore, thanks to Theorem \ref{th:gammaconvergence} and Theorem \ref{th:inftymon}, $\Gamma$-convergence and convergence of the outputs towards $\infty$-cm plans still hold choosing $\eps=1$.
%The simplicity of the problem and the large value of $c$ allows for the use of the primal version of the Sinkhorn algorithm (see for instance Section 4.2 in \cite{CP19}). \\
We observe that for $p=2$, every transport plan $\gamma$ is optimal. Indeed, by the orthogonality of the two supports, any plan is concentrated on a cyclically monotone set (see \eqref{ccmon}) and, as recalled in Section \ref{ldub} (see for instance \cite{GangboM,Santambrogio}), this is a sufficient  optimality condition. Here, since we look for a plan which minimizes the regularized problem which involves the entropy, the Sinkhorn algorithm selects the most diffuse one, as evidenced by the picture on the upper left of Figure \ref{fig:exdiscrete}. The other three pictures in Figure \ref{fig:exdiscrete} show that convergence towards an $\infty$-cm plan is really fast and it occurs already for $p=5$. 
\begin{figure}[H]
	\centering
	\includegraphics[scale=0.28]{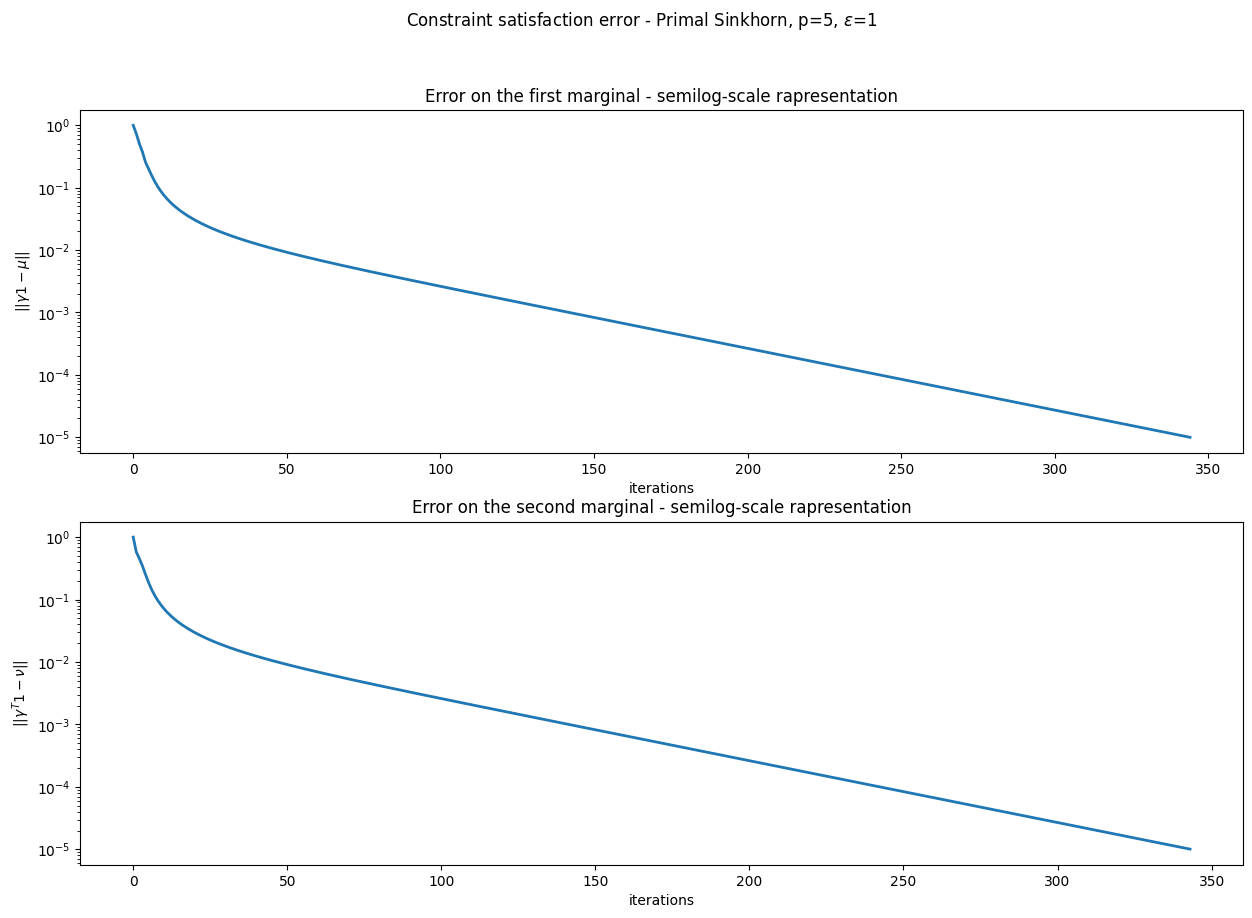}
	\caption{Error on the marginals: the first image shows the error $|\gamma\mathbbm{1}_4-\mu|$ of the output $\gamma$ on the first marginal and the second one the error $|\gamma^\intercal\mathbbm{1}_8-\nu|$ on the second marginal.}
	\label{fig:exdiscrete_errormarginals}
\end{figure}
\noindent Regarding the accuracy, Figure \ref{fig:exdiscrete_errormarginals} shows that for $p=5$ and $\eps=1$ the distance $|\gamma\mathbbm{1}_4-\mu|$ between the first marginal of the output $\gamma$ and the distance $|\gamma^\intercal\mathbbm{1}_8-\nu|$ between the second marginal of $\gamma$ and $\nu$ is of the order of $10^{-5}$ after only $350$ iterations.\\
\begin{figure}[H]
	\centering
	\includegraphics[scale=0.28]{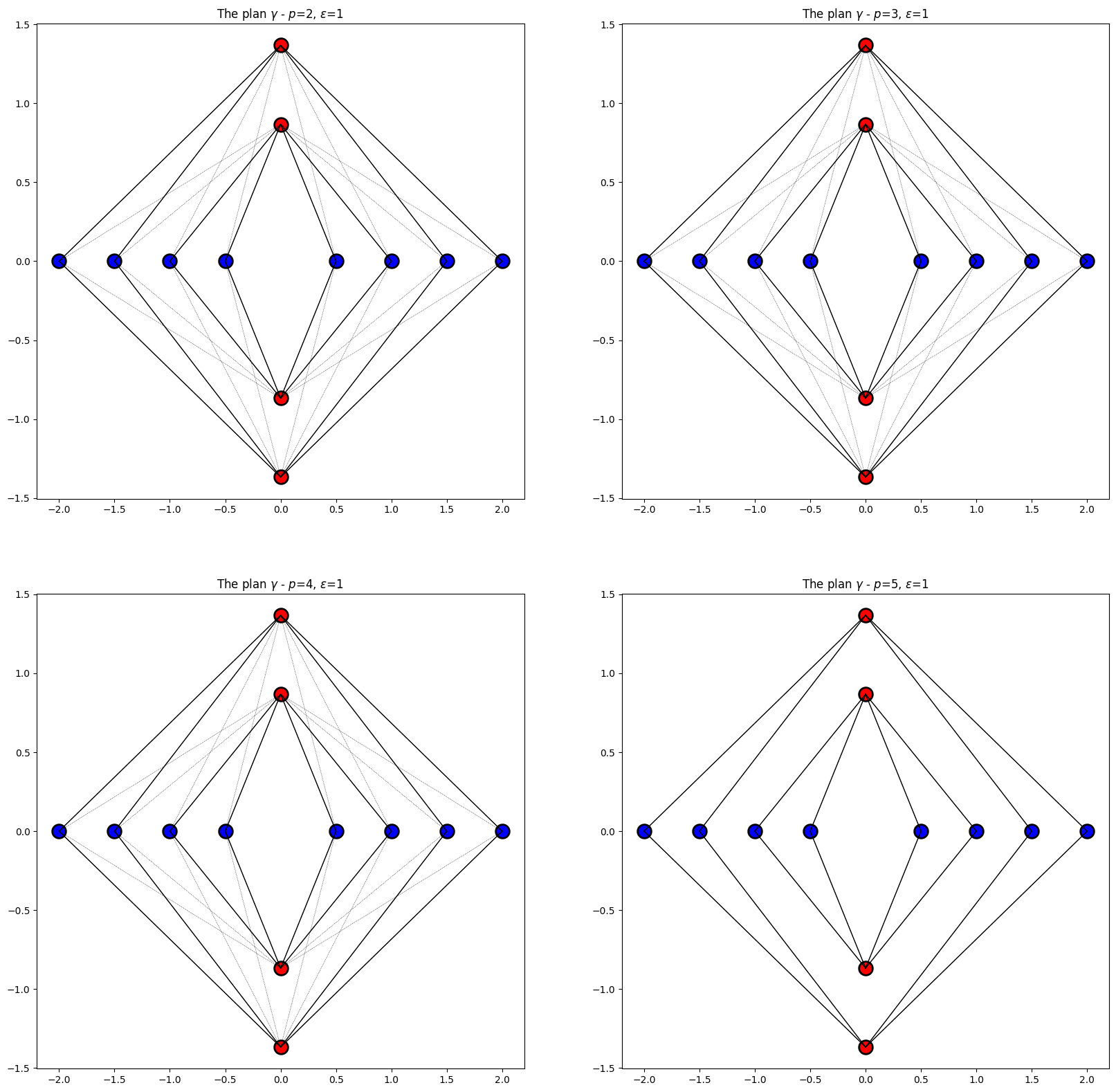}
	\caption{Example of convergence of the plan to the $\infty$-cm plan for $c(x,y)=\left(\max\{|x_1-y_1|,|x_2-y_2|\}\right)^p$, for $p\in\{2,3,4,5\}$, $\eps=1$ and $\mu$ and $\nu$ having orthogonal supports.}
	\label{fig:exdiscrete_costmax}
\end{figure}
\noindent We have also considered the same example (see Figure \ref{fig:exdiscrete_costmax}) with the cost function $c^p(x,y):=\left(\max\{|x_1-y_1|,|x_2-y_2|\}\right)^p$. In this case the convergence is still fast and the error is small after few iterations (of order $10^{-5}$ after about $180$ iterations).

\begin{rem}\label{rmk:aboutthesizeofc}
	When $c>1$, on the one hand,   we don't need $\eps$ to be small and we can even  take it large as $p$ grows (by case 2. in Theorem \ref{th:gammaconvergence} we can even choose for instance $\eps_p=(1+\lambda)^p$). On the other hand, we can encounter some difficulties when computing the Gibbs kernel $K_{ij}=e^{-\frac{c^p_{ij}}{\eps}}$: if $p$ is large it can happen that, for some $i,j$, $K_{i,j}=0$ making impossible to perform the division in the iterations of the primal version of the Sinkhorn algorithm. Fortunately, this problem can be overcome using the Log-Domain version (see for instance Section 4.4 in \cite{CP19}), as we did in the following example, represented by Figure \ref{fig:composition_examples_4}.
\end{rem}
Figure \ref{fig:composition_examples_4}, which shows a comparison among three different examples, considered for $p=2$ on the left and for $p=15$ on the right and $\eps=1$.
The two pictures on top in Figure \ref{fig:composition_examples_4} show the representation by arrows of the output when $\mu$ is uniformly concentrated on $400$ points which discretize the unitary square and $\nu$ is uniformly concentrated on the points $(1,2)$ and $(2,1)$. This is a discretization of the case $\mu$ uniform on the square $[0,1]^2$, where (see also Example 2.2 in \cite{CDPJ}) every $\gamma\in \Pi(\mu,\nu)$ is optimal for the problem 
\[\inf_{\gamma\in \Pi(\mu, \nu)} \gamma-\esup  c=\Vert c \Vert_{L^{\infty}(\gamma)}.\]
Indeed \begin{align*}
	\Vert c \Vert_{L^{\infty}(\gamma)}&=\sup\{c(x,y) \, : \, (x,y)\in\spt\gamma  \}\\&=|(0,0)-(1,2)|=|(0,0)-(2,1)|=\sqrt{5},
\end{align*}
for every $\gamma\in \Pi(\mu,\nu).$ Since every plan is optimal, when $p$ is smaller, as shown in the picture on the left, the role of the entropy is more important and the algorithm selects the most diffuse plan. While increasing the value of $p$ the entropy becomes more and more negligible and output becomes sparser: already for $p=15$ (on the right) the output is a good approximation of the $\infty$-cyclically monotone plan, which in this case is unique (see Theorem 5.6 in \cite{CDPJ}). A small variation, represented by the two figures in the middle, is to consider $\nu$ which is not uniformly concentrated on the points $(1,2)$ and $(2,1)$. Here we have taken $\nu=0.1\delta_{(1,2)}+0.9\delta_{(2,1)}$. Finally, on the bottom, we have implemented the case in which also $\nu$ is the discretization of an absolutely continuous measure. Here $\mu$ approximates the square $[-0.25,0.25]\times[-0.25,0.25]$ and $\nu$ the rectangle $[1.25,1.5]\times[-0.5,0.5]$ and both measures are supported on $100$ points. As previously, one can  notice that for $p=2$ the entropy plays an important role and the algorithm selects the most diffuse plan, while, already for $p=15$ the plan is considerably  sparser.

\begin{figure}[H]
	\centering
	\includegraphics[scale=0.3]{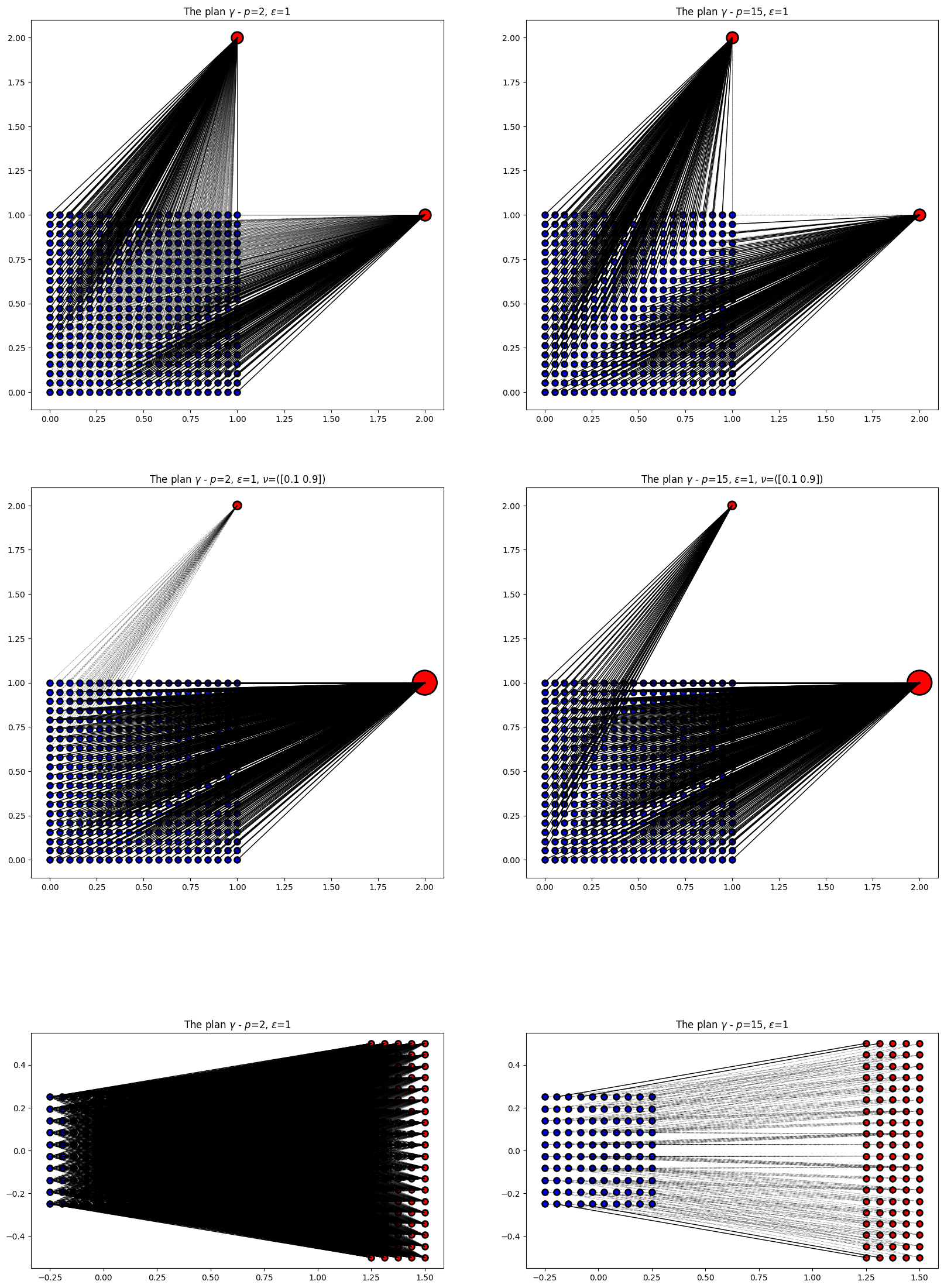}
	\caption{Comparison among three different examples: $\eps=1$, $c(x,y)=|x-y|^p$, on the left $p=2$, on the right $p=15$.  On top: $\mu$ a uniform discretization of the unitary square and $\nu$  uniformly concentrated on the points $(1,2)$ and $(2,1)$. In the middle: $\mu$ the same discretization of the unitary square, $\nu=0.1\delta_{(1,2)}+0.9\delta_{(2,1)}$ (the point $(2,1)$ is represented by a bigger dot). On bottom: $\mu$ a uniform discretization of the square $[-0.25,0.25]\times[-0.25,0.25]$ and $\nu$ of the rectangle $[1.25,1.5]\times[-0.5,0.5]$.  }
	\label{fig:composition_examples_4}
\end{figure}

We are now interested in the asymptotic behavior of $v_p:=\min_{\Pi(\mu, \nu)}J_p$ and we want to numerically represent the upper and lower bounds on the speed of convergence of $v_p$ towards $v_\infty:=\min_{\Pi(\mu, \nu)}J_\infty$ proved in Proposition \ref{prop:upperbound} and Proposition \ref{prop:lowerbound}. In order to apply Proposition \ref{prop:upperbound} and Proposition \ref{prop:lowerbound} it is enough to assume a lower bound on $v_\infty$ and not a pointwise one on $c$.
% (as required in order to have convergence to $\infty$-cm plans). This is important because the computation of $v_p$ can be delicate. Indeed when $c$ is big the convergence to $\gamma_\infty$ occurs for relatively small $p$ and therefore - since $\gamma_\infty$ is usually sparse - it is likely that the output matrix $\gamma$ has some $0$ entries, preventing the possibility of computing the logarithm in the computation of $H(\gamma|\mu\otimes\nu)$. This is why when looking for the asymptotic behavior is important to consider $\mu,\nu$ and $c$ such that $v_{\infty} >1$ but $c\le 1+\eta$, for some suitable $\eta>0$. Je ne comprenais pas du tout cette phrase, c'est pourquoi je l'ai enlevee

\begin{figure}[H]
	\centering
	\includegraphics[scale=0.3]{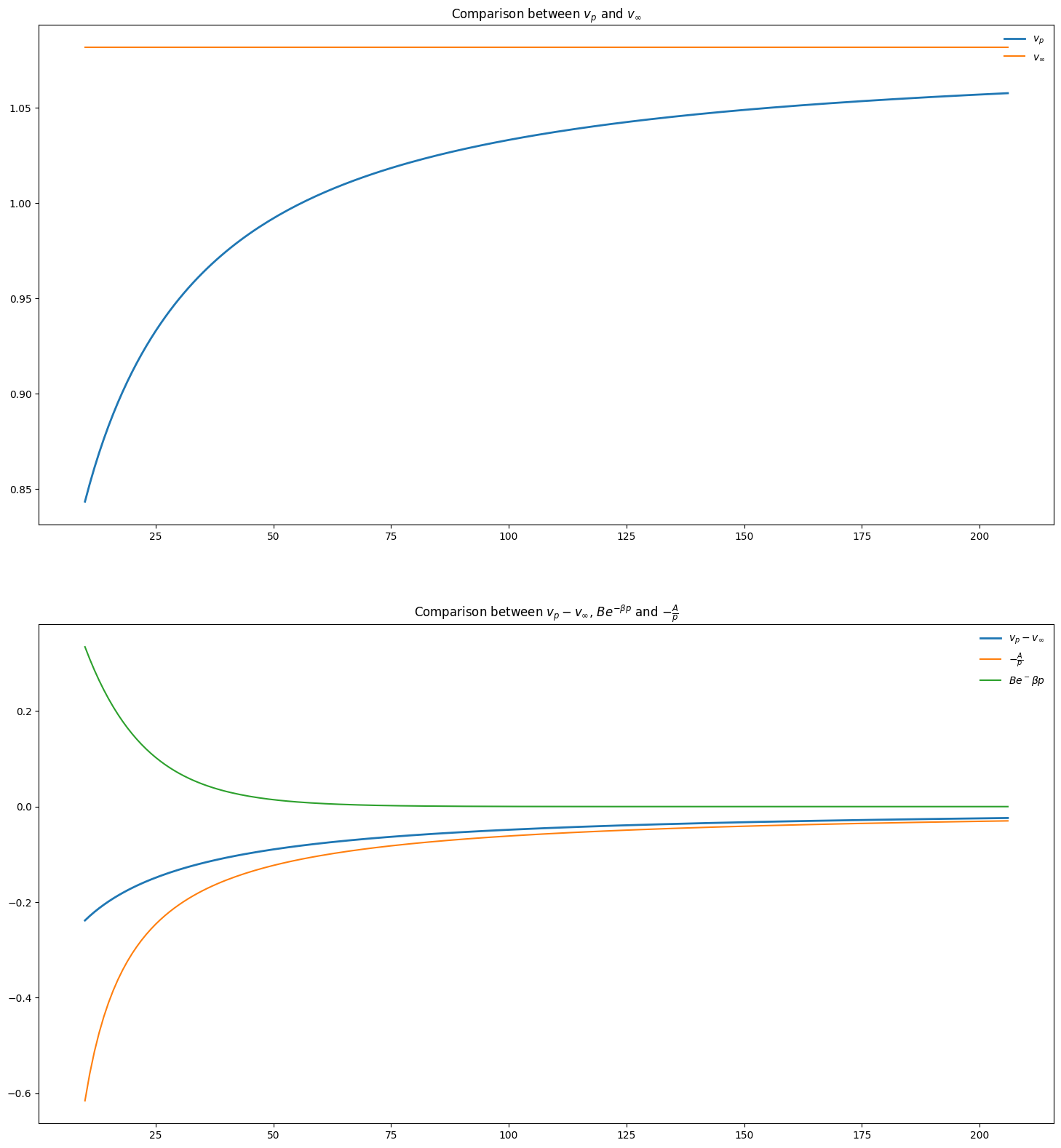}
	\caption{Comparison among the speed of convergence of $v_p-v_\infty$, $Be^{-\beta p}$ and $-\frac{A}{p}$ for $p\in [10,206]$, $\mu$ and $\nu$ as the ones on top of Figure \ref{fig:composition_examples_4}. On top: $v_p$ in blue and $v_\infty$ in orange. On bottom: $Be^{-\beta p}$ in green, $-\frac{A}{p}$ in orange and $v_p-v_\infty$ in blue. Here $A,B$ are obtained by linear regression (least squares)  and $\beta=\log(v_\infty)$ (the same $\beta$ as in Proposition \ref{prop:upperbound}).}
	\label{fig:exChaDepJuu_comparisonconvergence}
\end{figure}
\noindent Figure \ref{fig:exChaDepJuu_comparisonconvergence} provides an example of the asymptotic behavior of $v_p$ and of the speed of convergence in the case of $\mu$ and $\nu$ as the ones represented in the two pictures on top of Figure \ref{fig:composition_examples_4}. In light of what we have just remarked, we have re-scaled the cost $c$ 
%\textcolor{red}{(pour avoir un co\^{u}t plus petit j'ai "redimensionné" les supports des mesures (ils sont plus petits et plus rapprochés). En fait c'est le m\^{e}me, donc on peut écrire qu'on a redimensionné le co\^{u}t je pense. C'est plus clair )} 
in order to have $v_\infty\simeq1.08166$. For $p\in [10,206]$ the image on top of Figure \ref{fig:exChaDepJuu_comparisonconvergence} shows in blue how $v_p$ changes varying $p$, while $v_\infty$ is constant and is represented by the orange line. On bottom of Figure \ref{fig:exChaDepJuu_comparisonconvergence} we have represented in blue $v_p-v_\infty$, in green the upper bound $Be^{-\beta p}$ and in orange the lower bound $-\frac A p$, where $\beta=\log(v_\infty)$ by Proposition \ref{prop:upperbound} (indeed in this case $c$ is Lipschitz so  $\alpha=1>\log(v_\infty)$) and $A, B$ have been estimated by a linear regression method (by least squares).

Finally,  an example in which it is possible (even if it is really slow!) to compute $v_\infty$ exactly (see Remark \ref{rem:vinftybyhands})
is represented in Figure \ref{fig:exvinfbyhands}. 
Here $\mu$ is concentrated on $8$ points, given by \[\left\{(x_1,x_2) \, : \, x_1=-0.25+ 0.125\cdot i,\, i=1,\dots, 4, \ x_2\in\{-0.1,0.1\} \right\}\] and $\nu$ is concentrated on $8$ equidistant points of the segment starting from the point $(0.625,1.25)$ to the point $(1.25,0)$  of the line $y_2=-2y_1+2.5$.
We have computed $v_\infty$ for the cost $c(x,y)=|x-y|$ applying Remark
\ref{rem:vinftybyhands}, and we have obtained that $v_\infty \simeq 1.38647347$   and that the points which are at the minimal-maximal distance are $x_*=(-0.25, -0.1)$ and $y_*=(0.98214286, 0.53571429)$, connected by the purple segment in the picture. Regarding the speed of convergence we rescaled the cost in order to decrease further $v_\infty\simeq 1.052460609$. As shown in Figure \ref{fig:exvinfbyhands_comparisonconvergence}, $v_p$ is calculated varying $p$ in the interval $[10,172]$, with $\eps=500^2$. We observe that in this case, as shown in the picture on top, $v_p$ is initially smaller than $v_\infty$, then it increases becoming greater and finally it starts decreasing converging to $v_\infty$.

\begin{figure}[H]
	\centering
	\includegraphics[scale=0.3]{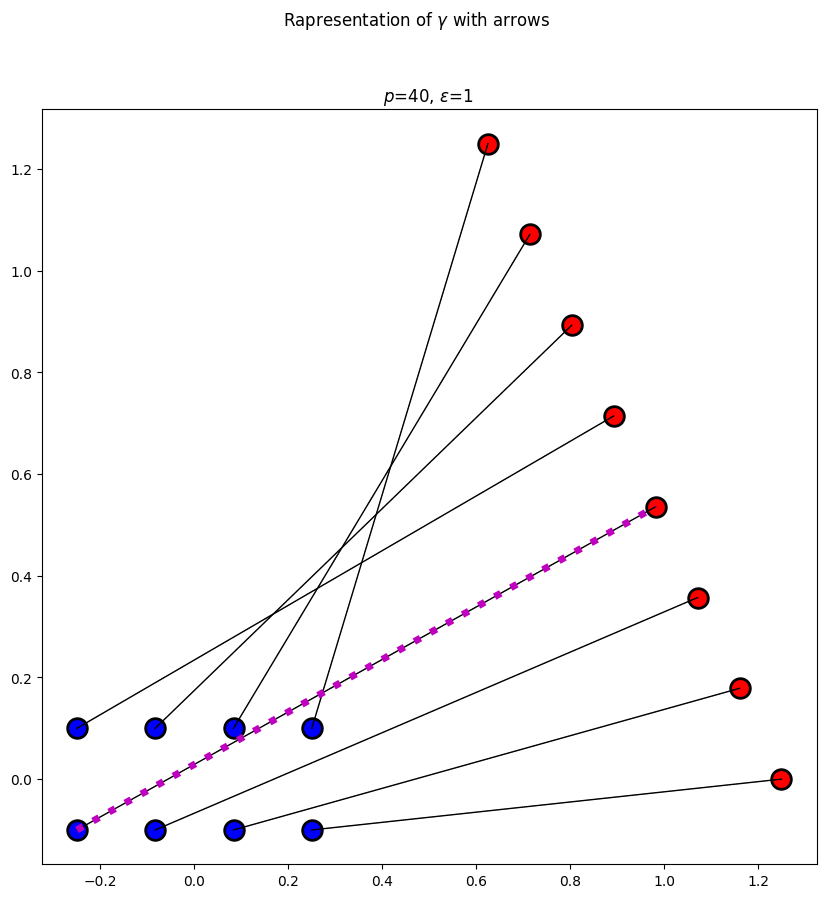}
	\caption{$\mu$ and $\nu$ uniformly distributed both concentrated on $8$ points. The value of $v_\infty$ is about $1.38647347$ and it is obtained transporting mass between the two points connected by the yellow segment.}
	\label{fig:exvinfbyhands}
\end{figure}
\begin{figure}[H]
	\centering
	\includegraphics[scale=0.3]{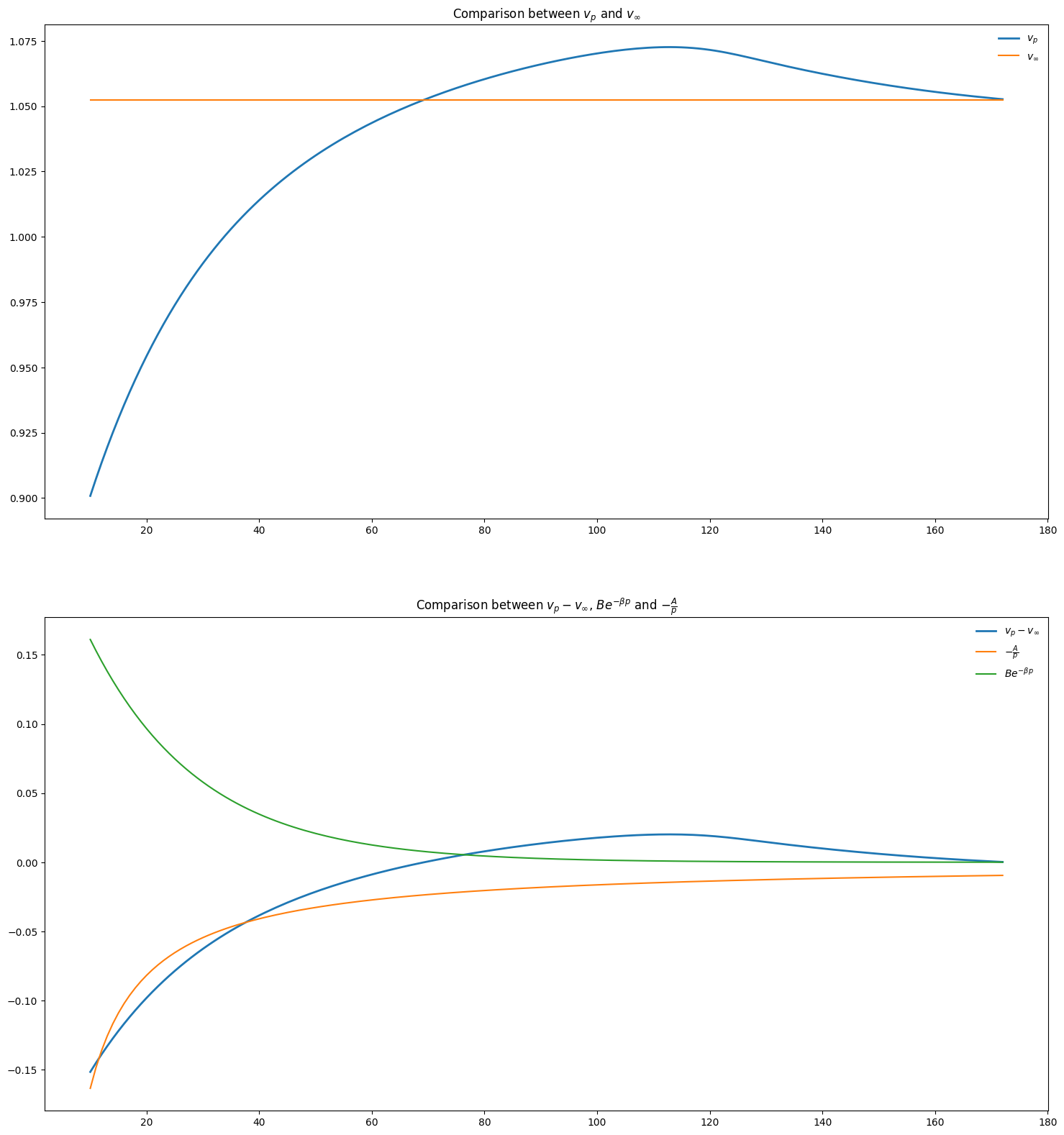}
	\caption{Comparison among the speed of convergence of $v_p-v_\infty$, $Be^{-\beta p}$ and $-\frac{A}{p}$ for $p\in[10,172]$ and $\eps=500^2$, $\mu$ and $\nu$ as the ones in Figure \ref{fig:exvinfbyhands}. On top: $v_p$ in blue and $v_\infty$ in orange. On bottom: $Be^{-\beta p}$ in green, $-\frac{A}{p}$ in orange and $v_p-v_\infty$ in blue. Here $A,B$ are obtained by linear regression (least squares) and $\beta=\log (v_\infty)$ (the same $\beta$ as in Proposition \ref{prop:upperbound}).}
	\label{fig:exvinfbyhands_comparisonconvergence}
\end{figure}
 
\noindent{{\bf{Acknowledgments:}} G.C. acknowledges the support of the Lagrange Mathematics and Computing Research Center. The research of C.B. and L.DP is partially financed  by the {\it ``Fondi di ricerca di ateneo, ex 60 $\%$''}  of the  University of Firenze and is part of the project  {\it "Alcuni problemi di trasporto ottimo ed applicazioni"}  of the  GNAMPA-INDAM. C.B.  and G.C. also acknowledge the support of the French Agence Nationale de la Recherche through the project MAGA (ANR-16-CE40-0014).}    
 
\bibliographystyle{plain}

\bibliography{biblio}

\end{document}